\newtheorem{thm}{Theorem}[section]
\newtheorem*{definition*}         {Definition}
\newtheorem{lemma}[thm]{Lemma}
\theoremstyle{remark}
\DeclareMathOperator{\GL}{GL}
\DeclareMathOperator{\Sp}{Sp}
\newcommand*{\Q}{\mathbb{Q}}
\newcommand*{\Ss}{\mathbb{S}}
\newcommand*{\Qa}{\overline{\mathbb{Q}}}
\newcommand*{\Z}{\mathbb{Z}}
\newcommand*{\G}{\mathbb{G}}
\newcommand*{\A}{\mathbb{A}}
\newcommand*{\R}{\mathbb{R}}
\newcommand*{\N}{\mathbb{N}}
\newcommand*{\C}{\mathbb{C}}
\newcommand*{\Hh}{\mathbb{H}}
\newcommand*{\GQ}{\textrm{Gal}(\overline{\mathbb{Q}}/\mathbb{Q})}
\newcommand*{\Disc}{\textrm{Disc}}
\author{Jonathan pila}
\author{Jacob Tsimerman}
\begin{document}
\title{The Andr\'e-Oort conjecture for the moduli space of Abelian Surfaces}
\maketitle
\begin{abstract}
We provide an unconditional proof of the Andr\'e-Oort conjecture for the coarse moduli space $\mathcal{A}_{2,1}$ of 
principally polarized Abelian 
surfaces,
following the  strategy outlined by Pila-Zannier.
\end{abstract}

\section{introduction and notation}

Set $\Hh_g$ to be the Siegel upper half space 
$$\Hh_g=\{Z\in M_g(\C)\mid Z=Z^t, Im(Z)>0\}.$$
Let $\mathcal{A}_{g,1}$ denote the coarse moduli space of principally 
polarized Abelian varieties of dimension $g$.

Our main theorem is the following, proving the Andr\'e-Oort conjecture for $\mathcal{A}_{2,1}$ :

\begin{thm}\label{AOSP2}
Let $V\in \mathcal{A}_{2,1}$ be an algebraic subvariety, which is equal to the Zariski closure of its CM points. Then $V$ is a special subvariety.
\end{thm}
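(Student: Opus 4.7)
The plan is to execute the Pila-Zannier strategy, which decomposes the theorem into four principal ingredients: definability of the Siegel uniformization in an o-minimal structure, the Pila-Wilkie counting theorem for rational points on definable sets, an Ax-Lindemann-Weierstrass theorem for $\mathcal{A}_{2,1}$, and a polynomial lower bound on the size of Galois orbits of CM points in terms of their discriminant.

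First, I fix a Siegel fundamental set $F\subset\Hh_2$ for $\Sp_4(\Z)$, and observe that the restriction of the uniformization map $\pi:\Hh_2\to\mathcal{A}_{2,1}$ to $F$ is definable in the o-minimal structure $\R_{\mathrm{an,exp}}$ (Peterzil-Starchenko). The pre-image $Y=\pi^{-1}(V)\cap F$ is therefore definable. Each CM point $s\in\mathcal{A}_{2,1}$ lifts to a point in $F$ fixed by a maximal $\Q$-torus of $\Sp_4$; moreover, every Galois conjugate $s^\sigma$ of $s$ lifts to a point in $F$ that is obtained from the fixed lift of $s$ by an element of $\Sp_4(\Q)$ whose height is polynomially bounded in the discriminant $D$ of the associated CM order. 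Thus, counting the Galois orbit of $s$ reduces to counting rational-parameter points on $Y$ of bounded height.

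Now suppose, for contradiction, that $V$ is not a special subvariety. The Ax-Lindemann-Weierstrass theorem for the Siegel uniformization then asserts that the algebraic part of $Y$ contains no positive-dimensional weakly special component, so the Pila-Wilkie theorem gives, for every $\varepsilon>0$, an upper bound of $O_\varepsilon(T^\varepsilon)$ on the number of relevant rational-parameter points on $Y$ of height at most $T$. On the other hand, one establishes a Galois-orbit lower bound of the form $[\Q(s):\Q]\gg_\varepsilon \Disc(s)^{\delta-\varepsilon}$ for some fixed $\delta>0$. Setting $T$ polynomial in $D$ produces a contradiction for all sufficiently large $D$, so only finitely many Hecke orbits of CM points can lie in $V$. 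Since $V$ is the Zariski closure of its CM points, a standard descent argument then forces $V$ to be a finite union of special subvarieties, and hence itself special.

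The hardest step is the Galois-orbit lower bound. For general $g$ this is available only under GRH (via Tsimerman's work), which must be circumvented here. The approach specific to $g=2$ is to exploit the fact that the relevant CM fields are all quartic, combining the Masser-W\"ustholz isogeny theorem with Colmez-type control on the Faltings heights of CM abelian surfaces, so as to extract the required polynomial lower bound on $[\Q(s):\Q]$ in terms of $\Disc(s)$ without appealing to GRH. This unconditional arithmetic input is the essential new ingredient that makes the proof work specifically in dimension two.
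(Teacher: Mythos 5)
Your overall skeleton is the right one (definability of $\pi|_F$ in $\R_{\mathrm{an,exp}}$, Pila--Wilkie, Ax--Lindemann, and a polynomial Galois-orbit lower bound), but two points diverge from the paper, and the second is a genuine gap.

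On the Galois-orbit input: the paper does not invoke Masser--W\"ustholz or any Colmez-type control of Faltings heights. It instead cites Tsimerman's unconditional Brauer--Siegel-type bound for arithmetic tori \cite{T}, which already gives $|\Gal(\Qa/\Q)\cdot \pi(x)|\gg \Disc(R)^{\varepsilon}$ for small $g$ (in particular $g=2$) without GRH. This is combined with the new Theorem~\ref{PolyHeight}, which shows that a CM point has a preimage in $F_g$ of height polynomially bounded in $\Disc(R)$, to yield Theorem~\ref{Heights}. Your alternative route (isogeny estimates plus Colmez-type height control) is a legitimate different strategy but is not what this paper does, and in any case you would need to supply the details; at the time of this paper such an argument was not available in the literature.

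The real gap is in the endgame. You assert that if $V$ is not special, then ``the algebraic part of $Y$ contains no positive-dimensional weakly special component,'' so Pila--Wilkie forces only finitely many CM points. This is false: a non-special $V$ of dimension $2$ can a priori contain infinitely many $1$-dimensional special subvarieties $S_i$, and every $\pi^{-1}(S_i)\cap F$ is a positive-dimensional algebraic (indeed weakly special) subset of $Y$. What Pila--Wilkie plus Ax--Lindemann actually give you is that all but finitely many CM points of $V$ lie on such $1$-dimensional special $S_i\subset V$. To conclude, the paper must rule out the possibility that infinitely many distinct $S_i$ occur: this is Lemma~\ref{finitespecial}, which combines the finiteness (up to conjugacy) of semisimple subalgebras of $\mathfrak{sp}_4$, the countability of the possible arithmetic lattices $\Gamma_i=H_i\cap \Sp_4(\Z)$, and the fact that the locus $B=\{(t,z): tHt^{-1}\cdot z\subset Z^0\}$ is definable (hence a countable definable set is finite). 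Then the set $U$ of fundamental-domain preimages of all weakly special curves in $V$ is shown to be definable and of local dimension $2$ somewhere, while simultaneously being covered by countably many $1$-dimensional algebraic sets, a contradiction. Your ``standard descent argument'' does not address this step, which is precisely the new idea required to close the Pila--Zannier strategy here.
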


We follow the general strategy of Pila-Zannier. One ingredient we need is the following Ax-Lindemann-Weierstrass theorem: We 
denote by $$\pi:\Hh_2\rightarrow \mathcal{A}_{2,1}$$ the natural projection map. 
We consider $\Hh_2\subset \R^6$ with coordinates provided by real and imaginary part of the complex coordinates in $\C^3$, 
and call a set  {\it semialgebraic\/} if it is a
semialgebraic set in $\R^6$.
Let $V\subset \mathcal{A}_{2,1}$ be an algebraic variety, $Z=\pi^{-1}(V)$, and 
$Y\subset Z$ an irreducible semialgebraic subvariety of $\Hh_2$. We say that $Y$ is \emph{maximal} if for all semialgebraic subvarieties $Y'$ containing $Y$ with
$Y'\subset Z$, $Y$ is a component of $Y'$. 

\begin{thm}\label{AL} 	
Let $Y\subset Z$ be a maximal semialgebraic variety. Then $Y$ is a weakly special subvariety. 
\end{thm}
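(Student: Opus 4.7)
The plan is to implement the Pila--Zannier point-counting strategy, which is by now the standard approach to Ax--Lindemann in Shimura settings. The four ingredients are: (i) a semialgebraic Siegel fundamental set $\mathcal{F}\subset\Hh_2$ for $\Sp_4(\Z)$; (ii) definability of the restricted uniformization $\pi|_{\mathcal{F}}$ in the o-minimal structure $\R_{\mathrm{an,exp}}$, a theorem of Peterzil--Starchenko; (iii) the Pila--Wilkie counting theorem in its block-family form; and (iv) the structure theory of algebraic subgroups of $\Sp_4$, together with the fact that orbits of $\Q$-subgroups give precisely the weakly special subvarieties of $\Hh_2$.

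Assume $Y$ has positive dimension (the point case being handled separately), fix $y_0\in Y\cap\mathcal{F}$, and introduce
$$ \Sigma := \{\, g \in \Sp_4(\R) \mid g\cdot y_0 \in \mathcal{F} \text{ and } g \cdot Y \subset Z \,\}. $$
After truncating $Y$ to a Siegel-bounded piece, $g\cdot Y$ meets only boundedly many $\Sp_4(\Z)$-translates of $\mathcal{F}$ whenever $g\cdot y_0\in\mathcal{F}$, so via Peterzil--Starchenko the condition $g\cdot Y\subset Z$ becomes definable, and therefore so is $\Sigma$. Because $Z$ is $\Sp_4(\Z)$-invariant, every $\gamma\in\Sp_4(\Z)$ with $\gamma\cdot y_0\in\mathcal{F}$ automatically lies in $\Sigma$; a standard lattice-point estimate in Siegel coordinates (Eskin--McMullen style) then yields a polynomial lower bound
$$ \#\{\gamma\in\Sigma\cap\Sp_4(\Z):\|\gamma\|\leq T\}\ \gg\ T^{c} $$
for some $c>0$.

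The Pila--Wilkie theorem applied to $\Sigma$ then produces a positive-dimensional semialgebraic block $B\subset\Sigma$ passing through many lattice points. The heart of the proof is to upgrade $B$ to a $\Q$-algebraic subgroup of $\Sp_4$: the Zariski closure of the lattice points in $B$ is automatically defined over $\Q$, and exploiting the invariance of $\Sigma$ under right multiplication by the stabilizer $\Theta(Y)=\{h\in\Sp_4(\R):h\cdot Y = Y\}$, one shows this closure contains a positive-dimensional $\Q$-algebraic subgroup $H$ with $H\cdot Y\subset Z$. Maximality of $Y$ then forces $H\cdot Y = Y$, so $H\leq\Theta(Y)$, and the orbit $H\cdot y_0$ is a weakly special subvariety contained in $Y$. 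A final application of maximality, combined with the classification of weakly special subvarieties in $\Hh_2$ (which comprises only finitely many types by rank), identifies $Y$ with $H\cdot y_0$.

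The main obstacle is the extraction step: from a Pila--Wilkie block containing many integer points, one must produce a genuine $\Q$-algebraic subgroup acting on $Y$. Pila--Wilkie guarantees rationality of the Zariski closure but not a group structure, and the group structure must moreover be \emph{compatible} with the action on $Y$. The required compatibility comes from the fact that $\Sigma$ is stable under right multiplication by $\Theta(Y)$, making $B$ essentially a union of cosets whose Zariski closure carries an algebraic group structure; one must then argue, using the $\Q$-rationality of the lattice points, that this group descends to $\Q$. For $g=2$, the relatively short list of reductive $\Q$-subgroups of $\Sp_4$ (corresponding to the small number of weakly special types) makes the final identification feasible without invoking heavy ergodic-theoretic machinery.
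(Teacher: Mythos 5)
Your proposal implements the Pila--Wilkie point-counting strategy for Ax--Lindemann, but the paper deliberately avoids Pila--Wilkie here: the authors state explicitly that their method ``is different to the one in [P] in that we do not use the results of Pila-Wilkie, though we do make use of o-minimality.'' The paper's proof instead proceeds by (a) reducing to the case where the Zariski closure of $Y$ is complex algebraic (Lemma~\ref{semical}), (b) taking a definable cell decomposition of the set $X=\{g\in\Sp_4(\R)\mid \dim_\C(g\cdot Y\cap Z_0)=1\}$, (c) splitting into two cases according to whether $X_i\cdot Y$ stays $2$-dimensional for every cell $X_i$ or jumps in dimension, and (d) closing the argument via the Ullmo--Yafaev characterization of weakly special subvarieties (Lemma~\ref{UY}) and the Peterzil--Starchenko definable Chow theorem (Lemma~\ref{defineableanalytic}) in the first case, and via simplicity of $\Sp_4(\R)$ in the second. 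So even if your sketch were fully correct, it would be a genuinely different route, closer to Pila's $\C^n$ argument and to Klingler--Ullmo--Yafaev, and would buy generality in $g$ at the cost of the ``block $\to$ $\Q$-group'' machinery that the paper entirely sidesteps.

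More importantly, your sketch has a concrete error at the counting step. With $\Sigma=\{g\in\Sp_4(\R)\mid g\cdot y_0\in\mathcal{F}\ \text{and}\ g\cdot Y\subset Z\}$ for a \emph{fixed} point $y_0$, the set $\Sigma\cap\Sp_4(\Z)$ is finite: $\mathcal{F}$ is a fundamental domain, so $\{\gamma\in\Sp_4(\Z): \gamma y_0\in\mathcal{F}\}$ has cardinality bounded by the stabilizer of $y_0$. The asserted lower bound $\#\{\gamma\in\Sigma\cap\Sp_4(\Z):\|\gamma\|\le T\}\gg T^c$ therefore fails, and with it the hypothesis needed to invoke Pila--Wilkie. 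The standard correction (and what the paper in effect does) is to replace the condition ``$g\cdot y_0\in\mathcal{F}$'' with a condition on the whole translate, e.g.\ ``$\dim_\C(g\cdot Y\cap Z_0)=\dim Y$,'' so that one is counting lattice translates of $\mathcal{F}$ met by $Y$ rather than translates fixing a single point; polynomially many such $\gamma$ come from a hyperbolic volume estimate applied to a compact piece of $Y$. Finally, you yourself flag the passage from a Pila--Wilkie block to a positive-dimensional $\Q$-algebraic subgroup $H$ with $H\cdot Y\subset Z$ as ``the main obstacle,'' and nothing in the sketch discharges it; this is precisely the step the paper avoids by using the cell-decomposition dichotomy and the fact that a definable, closed, analytic image $\pi(Y)$ must be algebraic, after which Lemma~\ref{UY} applies directly.
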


Fix $F_g\subset \Hh_g$ to be the standard fundamental domain \cite{Si}. We shall also need the following bound on heights of CM points

\begin{thm}\label{Heights}
Let $x\in F_g$ be a CM point, and let $H(x)$ be the height of $x$. There exists an absolute constant $\delta(g)>0$ such that:
$$H(x)\ll_g |\GQ\cdot \pi(x)|^{\delta(g)}.$$
\end{thm}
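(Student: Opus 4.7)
The plan is to pass from the archimedean height $H(x)$ of a CM point $x\in F_g$ to the stable Faltings height $h_F(A_x)$ of the associated principally polarized abelian variety, bound that Faltings height in terms of the discriminant of the CM algebra via an average form of Colmez's conjecture, and bound the discriminant below in terms of the Galois orbit size via Brauer--Siegel. Chaining the three estimates will give $H(x)\ll_g|\GQ\cdot\pi(x)|^{\delta(g)}$ for an absolute $\delta(g)$, with plenty of slack; the point is purely to get \emph{some} polynomial bound.

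For the first reduction, write $x=X+iY\in F_g$ with $Y$ Minkowski-reduced; then the eigenvalues of $Y$ are bounded below, so the archimedean contribution to $H(x)$ is controlled by $\|Y\|$, which controls $-\log\det Y$, namely the archimedean piece of $h_F(A_x)$ for the lattice $\Z^g+x\Z^g\subset\C^g$. The non-archimedean part of $H(x)$ reflects denominators of the algebraic coordinates of $x$ and is bounded via a N\'eron model of $A_x$ and its conductor, yielding $\log H(x)\ll_g h_F(A_x)+O(1)$. For the second and third reductions, $A_x$ is isogenous to some $A'$ with CM by the maximal order of a product $E=\prod_i K_i$ of CM fields, and this isogeny changes the Faltings height by at most a quantity polynomial in the conductor, hence in $|\Disc(E)|$. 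The average Colmez conjecture then expresses the mean of $h_F$ over the CM types of each $K_i$ as a sum of $L'/L(0,\chi)$ for Artin characters of $K_i$, and effective upper bounds on these logarithmic derivatives give $h_F(A')\ll_{g,\epsilon}|\Disc(E)|^\epsilon$. On the other hand CM theory realizes the $\GQ$-orbit of $\pi(x)$ (up to bounded multiplicity) as a torsor for a class group of an order in the reflex algebra of $E$, and a Siegel-type lower bound yields $|\GQ\cdot\pi(x)|\gg_\epsilon|\Disc(E)|^{1/2-\epsilon}$.

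The main obstacle is the average Colmez conjecture for the CM algebras occurring in dimension $g$. For $g=2$ the only CM fields appearing are quartic: the cyclic case is Colmez's own theorem and the split case (product of two imaginary quadratic fields) is the classical Chowla--Selberg formula, so the biquadratic and non-Galois quartic cases are the technical heart of the argument. A secondary but nontrivial difficulty is handling CM by \emph{non-maximal} orders: the Galois orbit is then controlled by the Picard group of the order rather than an ideal class group, so one needs a class-number lower bound uniform in the conductor, together with an isogeny argument that keeps the Faltings-height correction under control.
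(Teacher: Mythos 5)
Your outline diverges substantially from the paper's route, and the central reduction has a gap. The paper's Theorem 3.1 is entirely elementary: it produces, by explicit lattice reduction (Minkowski reduction, Siegel sets, and a careful choice of a symplectic basis adapted to the Riemann form $E_\xi$), a period matrix $Z=X+iY$ representing $A$ whose entries and denominators are polynomially bounded in $\Disc(R)$, and then checks (again via Minkowski/Siegel) that moving $Z$ into $F_g$ only costs a polynomial factor. No Faltings height, no Colmez, no $L$-functions appear; the Galois-orbit lower bound in terms of $\Disc(R)$ is then quoted wholesale from \cite{T}. You instead try to go through $h_F(A_x)$ and the average Colmez conjecture, which is a genuinely different — and heavier — path.

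The real gap is your first reduction, the claim $\log H(x)\ll_g h_F(A_x)+O(1)$, and more specifically its justification that $\|Y\|$ "controls $-\log\det Y$, namely the archimedean piece of $h_F$." With respect to the uniformization $A(\C)\cong\C^g/(\Z^g+x\Z^g)$ and the differential $\omega=dz_1\wedge\cdots\wedge dz_g$, the archimedean local contribution is $-\tfrac12\log\bigl(|c|^2\det(2Y)\bigr)$, where $c$ is the (nonzero algebraic) scalar comparing $\omega$ to a Néron differential; for a CM abelian variety with potentially good reduction everywhere, essentially all of the nontrivial arithmetic of $h_F$ sits inside $\log|c|$ (this is precisely the CM period, the object Chowla--Selberg/Colmez compute). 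Thus one gets, roughly, $\log\det Y=2h_F+\log|c|^2+O(1)$, and the inequality $\log H(x)\ll h_F$ is equivalent to an upper bound for $\log|c|$ of the same quality as the bound you want for $h_F$. In other words your Step 1 and Step 2 are not independent: controlling the period ratio $c$ is the content, and the archimedean comparison by itself gives nothing. The paper sidesteps this entirely by producing a good symplectic basis by hand, which is morally the same as bounding $|c|$, but does so without invoking any transcendence/period input. A second, smaller point: your non-archimedean reduction ("bounded via a Néron model and its conductor") also needs the choice of a symplectic basis with controlled denominators — which is again the explicit content of the paper's Lemmas 3.2, 3.3, 3.7 — so it is not a free consequence of having a Néron model.

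Beyond the gap, the reliance on the average Colmez conjecture for quartic CM algebras is a serious mismatch in difficulty and in provenance. At the time this paper was written the average Colmez conjecture was open (it was settled by Andreatta--Goren--Howard--Madapusi Pera and, independently, Yuan--Zhang only later), and even the individual quartic cases beyond abelian and split were incomplete. One would also have to address the Siegel-zero problem in the effective bound for $L'/L(0,\chi)$. You flag this yourself as "the main obstacle," but for the theorem as stated in this paper — which is meant to be unconditional — a proof resting on that input would not have served. In short, your strategy could in principle be rehabilitated with modern tools (and essentially is what was done later in Tsimerman's proof of André--Oort for $\mathcal{A}_g$), but as written it is neither the paper's argument nor a complete one: the bridge from $H(x)$ to $h_F$ is exactly where the substance of Theorem 3.1 lives, and the bridge from $h_F$ to $\Disc$ is an input the paper deliberately avoids.
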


We mention that for the strategy to go through, a basic requirement is the definability in $\R_{an,exp}$ of the projection map 
$\pi: F_g\rightarrow \A_{g,1}$ in  the case $g=2$. This was provided (for all $g$) in the recent work \cite{PS} of Peterzil and Starchenko.
We also use another result of these authors \cite{PS1} which says that  a definable, globally complex analytic subset of an
algebraic variety is algebraic. This can be thought of as an analogue of Chow's theorem, and
comes up for us in our proof of \ref{AL}.

The outline of the paper is as follows: in section 2 we review background concerning Shimura varieties. 
In section 3 we prove theorem \ref{Heights}. In section 4 we prove theorem \ref{AL}. 
Our method is different to the one in \cite{P}  in that we do not use the results of Pila-Wilkie, though we do make use of o-minimality.
In section 5 we combine everything to prove our  main theorem \ref{AOSP2}.

\section{Background: Shimura varieties}

We recall here some definitions regarding Shimura Varieties. 
Let $\Ss$ denote the real torus ${\rm Res}_{\C/\R}\G_{m\C}$.
Let $G$ be a reductive algebraic group over $\Q$, and let $X$ denote a conjugacy class of homomorphisms 
$$h:\Ss\rightarrow G_{\R}$$ satisfying the following 3 axioms:

\begin{itemize}

\item The action of $h$ on the lie algebra of $G_{\R}$ only has hodge weights $(0,0),(1,-1)$ and $(-1,1)$ occuring.
\item The adjoint action of $h(i)$ induces a Cartan involution on the adjoint group of $G_{\R}$
\item The adjoint group of $G_{\R}$ has no factors $H$ defined over $\Q$ on which the projection of $h$ becomes trivial.
\end{itemize}

This guarantees that $X$ acquires a natural structure of a complex analytic space. 
Moreover, $G(\R)$ has a natural action on $X$ given by conjugation, and this
turns $G(\R)$ into a group of biholomorphic automorphisms of $X$. We call the pair $(G,X)$ a {\it Shimura Datum\/}. A Shimura datum\/ $(H,X_H)$ is said to be  a {\it Shimura sub-datum\/} of $(G,X)$ if $H\subset G$ and $X_H\subset X$.

Fix $K$ to be a compact subgroup of 
$G(\A_f)$, where $\A_f$ denote the finite Adeles. We then define $$Sh_K(G,X)(\C)=G(\Q)\backslash X\times G(\A_f)/K$$ 
where $G(\Q)$ acts diagonally, and 
$K$ only acts on $G(\A_f)$. It is a theorem of Deligne \cite{D2} that $Sh_K(G,X)$ can be given the structure of an algebraic variety over $\Qa$, and we call $Sh_K(G,X)$ a {\it Shimura variety}. 
 
Given two Shimura varieties $Sh_{K_i}(G_i,X_i),$ and a map of algebraic groups $\phi:G_1\rightarrow G_2$ which takes $X_1$ to $X_2$ and $K_1$ to $K_2$, we 
get an induced map $\tilde{\phi}:Sh_{K_1}(G_1,X_1)\rightarrow Sh_{K_2}(G_2,X_2)$. Given an element $g\in G_2(\A_f)$, right multiplication by $g$ gives a 
correspondence $T_g$ on $Sh_{K_2}(G_2,X_2)$. We
define a {\it special subvariety} of $Sh_{K_2}(G_2,X_2)$ to be an irreducible component of the image of $Sh_{H_1}(G_1,X_1)$ under $T_g\circ\tilde{\phi}$.

We denote by $G^{ad}$ the adjoint form of a reductive group $G$. Following Ullmo-Yafaev \cite{UY}, we make the following definition:
\begin{definition*}\label{weakly}
An algebraic subvariety Z of $Sh_K(G;X)$ is
{\bf weakly special} if there exists Shimura  sub-datum ($H,X_H)$ of $(G,X)$
and a decomposition $$(H^{ad};X_H^{ad}) = (H_1,X_1) \times (H_2,X_2)$$ and a point
$y_2\in X_2$ such that $Z$ is the image of a connected component of $X_1\times{y_2}$.
\end{definition*}	

In this definition, a weakly special subvariety is special iff it contains a special point iff $y_2$ is special.

\section{Heights of CM points}

The aim of this section is to prove that a principally polarized CM Abelian variety in a fundamental domain for $\Hh_2$ has polynomial height in terms of the 
discriminant of its endomorphism algebra, which is essential for us to apply the results of Pila-Wilkie. We only need this result for Abelian surfaces, but we give
the proof in generality since the increased difficulty is only technical, and the theorem is fundamental to the Pila-Zannier strategy. 
For a matrix $Z\in\Hh_g\cap M_g(\Qa)$ we define the height $H(Z)$ to be the maximum of the height of one of the co-ordinates of $Z$ as a point in the affine 
space $\Qa^{g^2}$. To ease notation, we fix the following convention: Given two positive quantities $C_1$ and $C_2$ that may depend
on other variables, we say $C_1$ is \emph{polynomially bounded} in terms of $C_2$ if there are positive constants $a$ and $b$ such that $C_1\leq aC_2^b$.

\begin{thm}\label{PolyHeight}
Let $A$ be a complex, principally polarized Abelian variety with complex multiplication. Set $R=Z(End(A))$ to be the center of its endomorphism algebra and let
$x\in F_g$ be the point representing $A$. There exists a constant $B_g$ depending only on $g$ such that
$H(x)$ is polynomially bounded in terms of $\Disc(R)$.
\end{thm}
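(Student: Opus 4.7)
The plan is to construct an explicit period matrix for $A$ from its CM data, bound its height directly, and then propagate the bound to $x \in F_g$.

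By the theory of complex multiplication, $A$ is isomorphic (after an isogeny of polynomially bounded degree) to $\C^g / \Phi(\mathfrak a)$ for some CM type $(E,\Phi)$ where $E \supseteq R \otimes \Q$ is a CM algebra of dimension $2g$, and $\mathfrak a$ is a fractional ideal of an order $\mathcal O \subseteq E$ containing $R$. The principal polarization corresponds to an element $\xi \in E^{\times}$ with $\Phi(\xi) \in (i\R_{>0})^g$ and determines a $\Z$-valued symplectic form $\langle x,y\rangle = \mathrm{Tr}_{E/\Q}(\xi x \bar y)$ on $\mathfrak a$. Since $\mathcal O \supseteq R$, $\Disc(\mathcal O)$ divides a bounded power of $\Disc(R)$, so working with $\mathcal O$-data only costs polynomial factors.

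Applying Minkowski's theorem to $\mathfrak a$ equipped with the quadratic form $\sum_k |\phi_k(\cdot)|^2$, we obtain a $\Z$-basis $\omega_1, \ldots, \omega_{2g}$ of $\mathfrak a$ with $\max_k |\phi_k(\omega_i)|$ polynomially bounded in $\Disc(R)$. Using the Smith normal form of the Gram matrix of $\langle \omega_i, \omega_j \rangle$, we modify this to a symplectic $\Z$-basis $\lambda_1, \ldots, \lambda_{2g}$ of $\mathfrak a$ still satisfying polynomial bounds on $|\phi_k(\lambda_i)|$. Forming the $g \times 2g$ matrix $\Pi = (\phi_k(\lambda_j))$ with $g \times g$ blocks $\Pi_1, \Pi_2$, the matrix $Z_0 := \Pi_1^{-1} \Pi_2 \in \Hh_g$ is a period matrix for $A$. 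Its entries have polynomially bounded archimedean values by construction; the same argument applied to Galois conjugates of the CM type bounds the archimedean values of the Galois conjugates of $Z_0$; and integrality of the $\lambda_i$ controls the non-archimedean contributions. Hence $H(Z_0)$ is polynomially bounded.

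It remains to pass from $Z_0$ to the actual $x \in F_g$, which differ by some $\gamma \in \Sp_{2g}(\Z)$. The element $\gamma$ itself need not be polynomially bounded, so instead one repeats the construction above with a symplectic basis of $\mathfrak a$ chosen so that the resulting period matrix already lies in $F_g$: Siegel's fundamental domain is characterized by Minkowski-reducedness of $\mathrm{Im}(x)$, the bounds $|\mathrm{Re}(x_{ij})| \le 1/2$, and the maximality of $\det \mathrm{Im}(x)$ over the $\Sp_{2g}(\Z)$-orbit, and these conditions are achieved by a further $\Sp_{2g}(\Z)$-adjustment of $\{\lambda_i\}$ that keeps $|\phi_k(\lambda_i')|$ polynomially bounded. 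The main technical difficulty lies precisely here: verifying the compatibility between Minkowski reduction of the $\Z$-lattice $\mathfrak a$ and Siegel reduction of the associated symplectic matrix. This is handled by a careful inductive analysis along the Siegel reduction algorithm, using that the Hodge metric on the CM lattice is comparable to $\sum_k |\phi_k(\cdot)|^2$ up to polynomial factors in $\Disc(R)$.
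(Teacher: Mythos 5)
The broad skeleton here agrees with the paper (reduce to an $\mathcal O$-ideal of bounded index, use Minkowski/Siegel reduction on the trace form to find a basis of $\mathfrak a$ with all archimedean embeddings polynomially bounded, build a period matrix, pass to $F_g$), but the crucial final step is handled incorrectly. You assert that ``the element $\gamma$ itself need not be polynomially bounded'' and therefore propose to redo the construction so that the period matrix lands in $F_g$ directly. This is precisely backwards from what the paper does, and your proposed alternative is circular: choosing a symplectic basis of $\mathfrak a$ ``so that the resulting period matrix already lies in $F_g$'' while keeping $|\phi_k(\lambda_i')|$ bounded is the content one would need to prove, not a device one can invoke, and your appeal to ``a careful inductive analysis along the Siegel reduction algorithm'' names the difficulty without resolving it. The paper instead proves a clean lemma: if $Z=X+iY$ and $h(Z)=\max(|z_{ij}|,1/|\!\det Y|)$, then any $\gamma\in\Sp_{2g}(\Z)$ with $\gamma\cdot Z\in F_g$ has all entries polynomially bounded in $h(Z)$. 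The quantity you overlooked is the lower bound on $\det\mathrm{Im}(Z_0)$; once you observe that your constructed $Z_0$ has $Y_0$ with polynomially bounded denominators (so $\det Y_0$ is not too small), the reducing matrix $\gamma$ \emph{is} polynomially bounded, by quantifying Siegel's classical argument: the diagonal entries of $Y_0'^{-1}$ are controlled above and below via $\prod y_i\ll|\det Y|^{-1}$ and the fact that integer vectors have norm $\geq 1$, which bounds the rows of the $C,D$ blocks, and then one completes $\gamma$ by a bounded upper-triangular correction.

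Two further gaps. First, your passage from a Minkowski-reduced basis to a \emph{symplectic} one via ``Smith normal form of the Gram matrix'' is not justified: a priori the unimodular change of basis occurring there could be large. The paper avoids this by exploiting the CM structure, first bounding the polarization element $\xi$ itself (after scaling by a totally real unit found by another Minkowski argument), then choosing a bounded basis of $I\cap O_F$ for the $\alpha_i$ and producing the dual $\beta_i$ with explicit, bounded corrections. Second, the non-simple case ($A$ isogenous to $\prod_i A_i^{n_i}$) is not seriously addressed in your proposal; the paper needs a nontrivial lemma producing a bounded-degree isogeny to a product of principally polarized $B_i$ and a separate reduction of the Hermitian matrix defining the polarization (Lemma \ref{hermitian}). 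Treating $E$ merely as ``a CM algebra'' elides these steps, and they do not follow formally from the simple case.
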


Combined with the results of \cite{T}, this proves theorem \ref{Heights}.

\begin{proof}

We first will handle the case where $A$ is simple. $\newline$

\textbf{Case 1: A is simple}$\newline$

In this case, there is a CM field $K$ such that $A$ has $CM$ by $K$. Let $S=\{\phi_1\dots,\phi_g\}$ denote the complex embeddings of $K$ that make up the
CM type of $A$, and set $F$ to be the maximal totally real subfield of $K$, so that $K$ is a quadratic extension of $F$. In this case $R$ is just
the endomorphism ring of $A$. Now, there is a $\Z$-lattice $I\subset O_K$ such that $A$ is isomorphic to $\C^g/\phi_S(I)$ as a complex torus under the embedding
$$\phi_S:K\hookrightarrow \C^g, \phi(a)=(a^{\phi_1},a^{\phi_2},\dots,a^{\phi_g}).$$ Moreover, the order of $I$ must be $R$. 

\begin{lemma}\label{polyideal}
There is a $\nu\in K$ such that $\nu I\subset O_K$ and $[O_K:\nu I]$ is polynomially bounded in terms of $\Disc(R)$. 
\end{lemma}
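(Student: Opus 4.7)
The plan is to split the problem into two pieces: an ``$O_K$-ideal part'' bounded by Minkowski, and a ``conductor part'' measuring the failure of $I$ to be an $O_K$-ideal. First I would pass to $J := O_K \cdot I$, which is a fractional $O_K$-ideal containing $I$. By Minkowski's theorem applied in $O_K$, the inverse class $[J^{-1}] \in \mathrm{Cl}(O_K)$ is represented by an integral ideal of norm at most $c_g |\Disc(O_K)|^{1/2}$, so there exists $\nu \in K^\times$ with $\nu J \subset O_K$ and $N(\nu J) \leq c_g |\Disc(O_K)|^{1/2}$. A fortiori $\nu I \subset O_K$, and
\[ [O_K : \nu I] = [O_K : \nu J] \cdot [\nu J : \nu I] = N(\nu J) \cdot [J : I]. \]

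Next I would bound $[J : I]$ by the index of the conductor $\mathfrak{f} := \{a \in O_K : a O_K \subset R\}$. Because $\mathfrak{f}$ is stable under multiplication by $O_K$ and $I$ is a proper $R$-ideal,
\[ \mathfrak{f} \cdot J = (\mathfrak{f} \cdot O_K) \cdot I = \mathfrak{f} \cdot I \subset R \cdot I = I, \]
so $[J : I] \leq [J : \mathfrak{f} J] = N(\mathfrak{f}) = [O_K : \mathfrak{f}]$. Setting $n := [O_K : R]$, the inclusion $n O_K \subset R$ gives $n \in \mathfrak{f}$, hence $n O_K \subset \mathfrak{f}$ and therefore $[O_K : \mathfrak{f}] \leq n^g$.

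To conclude, I would substitute $\Disc(R) = n^2 \Disc(O_K)$, giving $n \leq |\Disc(R)|^{1/2}$ and $|\Disc(O_K)| \leq |\Disc(R)|$, so that
\[ [O_K : \nu I] \ll_g |\Disc(O_K)|^{1/2} \cdot n^g \ll_g |\Disc(R)|^{(g+1)/2}, \]
which is the desired polynomial bound. I do not anticipate a serious obstacle: Minkowski's bound and the conductor estimate are both standard, and the main task is bookkeeping to keep the implicit constants depending only on $g$. The step that most warrants care is the identity $\mathfrak{f} \cdot J = \mathfrak{f} \cdot I$, which uses crucially that $I$ is \emph{proper} as an $R$-module (its order being exactly $R$, rather than some larger order), so that $\mathfrak{f} \subset R$ acts on $I$ through $R$.
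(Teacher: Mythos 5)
Your proof is correct and follows essentially the same route as the paper: pass to $J = O_K \cdot I$, apply Minkowski to $J$, and then control the discrepancy $[J:I]$ (or $[O_K:\nu I]$ vs.\ $[O_K:\nu J]$) using the index $e_R = [O_K:R]$. The only slip is cosmetic: since $[K:\Q]=2g$, the inclusion $nO_K \subset \mathfrak{f}$ gives $[O_K:\mathfrak{f}] \leq n^{2g}$, not $n^g$, so your final exponent becomes $g+\tfrac12$ rather than $\tfrac{g+1}{2}$ — either way polynomial, so the conclusion stands.
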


\begin{proof}

If $R=O_K$, then the lemma is a known consequence of Minkowski's bound. For the general case, set $e_R = [O_K:R]$. Note that $$\Disc(R)=\Disc(O_K)e_R^2$$ and
that $$e_RO_K\subset R\subset O_K.$$ Set $J=O_K\cdot I$, so that $J$ is an $O_K$-ideal with $$e_RJ\subset I\subset J.$$ By the above we can find 
a $\nu\in K$ with $[O_K:\nu J]$ is polynomially bounded in terms of $\Disc(O_K)$. Then $$\nu I\subset \nu J\subset O_K$$ and
$$[O_K:\nu I] \leq [O_K:\nu e_RJ]\leq e_R^{2g}[O_K:J],$$ and the claim follows.

\end{proof}

\begin{lemma}\label{polybasis}
Given a $\Z$-lattice $I\subset O_K$ there is a basis $\alpha_1,\dots,\alpha_{2g}$ of $I$, such that the absolute values of all conjugates of the $\alpha_i$ are
polynomially bounded in terms of $\Disc(O_K)\cdot[O_K:I]$. 
\end{lemma}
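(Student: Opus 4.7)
The plan is to apply the classical geometry of numbers to the lattice $I$ embedded in $\R^{2g}$ via the Minkowski embedding. Fix a system of representatives $\phi_1,\ldots,\phi_g$ for the pairs of complex-conjugate embeddings of the CM field $K$, and set $\sigma(\alpha)=(\mathrm{Re}\,\phi_1(\alpha),\mathrm{Im}\,\phi_1(\alpha),\ldots,\mathrm{Re}\,\phi_g(\alpha),\mathrm{Im}\,\phi_g(\alpha))\in\R^{2g}$. Under $\sigma$, $O_K$ becomes a full-rank lattice of covolume $2^{-g}\sqrt{\Disc(O_K)}$, so $I$ has covolume bounded by a $g$-dependent constant times $[O_K:I]\cdot\sqrt{\Disc(O_K)}$. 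I would equip $\R^{2g}$ with the sup-norm $\|\sigma(\alpha)\|_\infty=\max_i|\phi_i(\alpha)|$, which bounds the absolute value of every conjugate of $\alpha$ (complex conjugation preserves absolute values) and is equivalent to the Euclidean norm up to a constant depending only on $g$.

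Next I would show that the first successive minimum $\lambda_1$ of $I$ in this norm satisfies $\lambda_1\geq 1$: for any nonzero $\alpha\in I\subset O_K$ one has $1\leq|N_{K/\Q}(\alpha)|=\prod_{i=1}^{g}|\phi_i(\alpha)|^2$, so the AM--GM inequality forces $\max_i|\phi_i(\alpha)|\geq 1$. Minkowski's second theorem bounds the product of the $2g$ successive minima of $I$ by a $g$-dependent constant times the covolume, and combining this with $\lambda_1\geq 1$ yields $\lambda_{2g}\ll_g [O_K:I]\cdot\sqrt{\Disc(O_K)}$. Finally I would invoke the standard fact from the geometry of numbers that any full-rank lattice $L\subset\R^n$ admits a $\Z$-basis $v_1,\ldots,v_n$ with $\|v_i\|\ll_n\lambda_i(L)$ (via Mahler's selection argument---one cannot in general realize successive minima by a basis, but one can do so up to a dimension-dependent factor). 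Applied to $I$, this produces a basis $\alpha_1,\ldots,\alpha_{2g}$ whose sup-norms, and hence all of whose conjugates, are polynomially bounded in $[O_K:I]\cdot\Disc(O_K)$, as claimed.

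The main obstacle is essentially just bookkeeping: the passage from a bound on successive minima to a bound on an honest $\Z$-basis. Since Minkowski's second theorem only furnishes linearly independent vectors realizing the $\lambda_i$ rather than a basis, one must quote (or reprove) Mahler's basis lemma. This is a well-known and routine statement, so in practice the proof reduces to a clean application of Minkowski's second theorem once the Minkowski embedding is in hand and the trivial lower bound $\lambda_1\geq 1$ from the integrality of the norm is noted.
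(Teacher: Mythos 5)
Your proposal is correct and rests on exactly the same two pillars as the paper's proof: the lower bound on lattice vectors coming from the integrality of the norm, and the covolume bound $\ll \Disc(O_K)\cdot[O_K:I]$ coming from the discriminant. The only technical difference is in how each argument converts those two facts into a short basis: the paper works in $SL_{2g}(\Z)\backslash SL_{2g}(\R)$ and applies reduction theory for Siegel sets (Iwasawa decomposition $nak$ with bounded $n$ and ordered diagonal $a$, then uses $\det a=1$ together with the lower bound on $d_{2g,2g}$ to bound $d_{1,1}$), whereas you invoke Minkowski's second theorem to bound $\lambda_{2g}$ and then appeal to Mahler's basis lemma to pass from successive minima to an actual $\Z$-basis. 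These are two standard and essentially interchangeable packagings of the same geometry-of-numbers content, and you correctly flag the one non-trivial point in your route, namely that successive minima need not be realized by a basis without losing a dimension-dependent constant.
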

\begin{proof}

Consider the standard embedding $\psi$ of $O_K$ as a lattice in $\C^{g}$ given by $$\psi(\alpha)=(\alpha^{\phi_i})_{1\leq i\leq g}.$$
 Then the covolume of $\psi(I)$ as a lattice is $\Disc(O_K)\cdot[O_K:I]$, and every vector in $I$ has norm at least $1$
(since the norm of every algebraic integer is at least 1).  

Now consider the lattice $$\frac{1}{Vol(\C^g/I)^{1/{2g}}}\cdot\psi(I)$$ as an element  $l$ in $SL_{2g}(\Z)\backslash SL_{2g}(\R)$. Let $N,A,O_{2g}$ 
denote the upper triangular subgroup, the diagonal subgroup, and the maximal compact orthogonal group of $Sl_{2g}(\R)$. 
By the theory of Siegel sets, there is a representative $nak$ of $l$ in $N(\R)D(\R)O_{2g}(\R)$ where  $n$ has all its elements bounded by $\frac{1}{2}$ in absolute
value, and the diagonal matrix $d$ has $$d_{1,1}\geq d_{2,2}\geq\dots\geq d_{2g,2g}.$$ Now, since $d_{2g,2g}$ is the norm of an element, we know that 
$$d_{2g,2g}\gg \frac{1}{Vol(\C^g/I)^{1/{2g}}}.$$ Since the product of the $d_{i,i}$ is $1$, we deduce that $d_{1,1}$ is polynomially bounded in terms of 
$\Disc(O_k)\cdot[O_k:I]$. The basis corresponding to $nak$ thus has all its coordinates polynomially bounded in terms of $\Disc(O_K)\cdot[O_K:I]$.
\end{proof}

Now, consider again our Abelian variety $A$. As before, $A$ is isomorphic as a complex torus to $\C^g/\phi_S(I)$ for some $I\subset K$. The principal polarization
on $A$ corresponds to a totally imaginary element $\xi\in K$, which induces the Riemann form $$E_{\xi}(a,b)=tr_{K/\Q}(\xi a b^{\rho})$$ where $\rho$ denotes 
complex conjugation. Moreover, since the polarization is principal $E_{\xi}$ has determinant 1 as a bilinear alternating form on $I$. 

By changing co-ordinates, one can change the pair $(I,\xi)$ to $(I\nu, \xi(\nu\nu^{\rho})^{-1})$ where $\nu\in K^{\times}$. By lemma \ref{polyideal} we can change
$I$ to be a sublattice of $O_K$ with $e_I = [O_K:I]$ polynomially bounded by $\Disc(R)$, so we assume that $I$ is of this form. Next, the determinant of $E_{\xi}$
as a binary form on $I$ is $$N_{K/\Q}(\xi)\cdot \Disc(O_K)[O_K:I]^2,$$ so that $N_{K/Q}(\xi)$ is bounded above by 1. Moreover, $e_IO_K\subset I$, so that
$$tr(e_I^2\xi O_K)\in \Z,$$ and so $\xi\in e_I^{-2}\Disc(K)^{-1} O_K$. 

We know that $-\xi^2$ is a totally positive element, so we can consider the lattice $$L_{xi}=\psi(O_F\cdot (-xi^2)^{\frac14})\subset\R^g.$$ The covolume of 
$L_{xi}$ is polynomially bounded in terms of $\Disc(R)$, and must contain an element inside a sphere with radius polynomially bounded in terms of the covolume. Therefore,
there exists an element $\nu\in O_F$ such that $\nu\nu^{\rho}\xi=\nu^2\xi$ has all its conjugates polynomially bounded in terms of $\Disc(R)$. Since $\nu$ must
have norm polynomially bounded by $\Disc(R)$, we can
and do assume that $I\subset O_K$ with $[O_K:I]$ polynomially bounded by $\Disc(R)$, and that $\xi$ has all its conjugates polynomially bounded by $\Disc(R)$.

Now consider the representative $(I, \xi)$ of $A$.  To pick a symplectic basis of $(\nu^{-1} I)$, we simply take a basis $\alpha_i$ of
$I\cap O_F$ as in lemma \ref{polybasis}. Next we consider the lattice $Im(\psi_s(I))$ in $(i\R)^g$. Pick a 
basis as in lemma \ref{polybasis}, and refine it to the dual symplectic basis $\beta'_i$ to $\alpha_i$. Since all the conjugates of $\xi$ are polynomially
bounded in terms of $\Disc(R)$, the basis $\beta'_i$ has all its components polynomially bounded in terms of $\Disc(R)$ as well. Lift $\beta'_i$ to 
elements $\beta_i = \beta'_i + c$, where $c$ is an element in $F$, such that $\beta_i\in I$. Note that $c$ is an element of $e_I^{-1}O_F/O_F$, and can thus be
chosen to have all its components polynomially bounded by $\Disc(R)$. Finally, since the values $E_{\xi}(\beta_i,\beta_j)$ might not be $0$, we replace
$\beta_i$ by $\beta_i - \sum_{i\leq j}E_{\xi}(\beta_i,\beta_j)\alpha_j$. 

Now consider the matrix $Z\in \Hh_g$ which represents the elements $\beta_i$ in terms of the $\alpha_i$. $Z$ is the matrix representing $A$. Moreover,
the above construction gives $Z=X + iY$, where $X\in M_g(\Q)$ with all its denominators polynomially bounded by $\Disc(R)$, and $Y\in M_g(K)$ such that
all the entries of $Y$ have all their complex conjugates polynomially bounded by $\Disc(R)$, and likewise for the denominators of the entries of $Y$
(the denominator of an algebraic number $\alpha$ is the smallest integer $n$ with $n\alpha$ an algebraic integer).

It is evident that $Z$ has height polynomially bounded by $\Disc(R)$, and that the degree of all the entries in $Z$ is at most $4g$. All that is left to see is
that putting $Z$ in its fundamental domain does not increase the height by too much, and so the following lemma completes the proof:

\begin{lemma}

Let $g\in\N$ be a natural number, and $Z=X+iY$ be an element in siegel upper half space $\Hh_g(\C)$. Set $h(Z) = Max(|z_{ij}|, \frac1{|Y|})$. Then for $\gamma\in 
Sp_{2g}(\Z)$ such that $\gamma\cdot Z\in F_g$, all the co-ordinates of $\gamma$ are polynomially bounded in terms of $h(Z)$. 

\end{lemma}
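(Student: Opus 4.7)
The plan is to bound each of the four blocks of $\gamma = \begin{pmatrix} A & B \\ C & D \end{pmatrix}$ separately, using the symplectic transformation law together with the defining conditions of $F_g$. Writing $Z' = \gamma \cdot Z = X' + iY' \in F_g$ and $M = CZ+D$, the central tool is the cocycle identity
\[
M^* Y' M = Y,
\]
which follows from the standard formula $\mathrm{Im}(\gamma \cdot Z) = M^{-*} Y M^{-1}$.

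For $C$ and $D$: since $Y'$ is Minkowski-reduced as part of membership in $F_g$, a standard lemma of reduction theory gives $\lambda_{\min}(Y') \geq c_g > 0$ for an absolute constant depending only on $g$. Combined with $\|Y\| \leq g\, h(Z)$, the cocycle identity immediately gives $\|M\|^2 \leq g\, h(Z)/c_g$. Writing $\mathrm{Re}(M) = CX+D$, $\mathrm{Im}(M) = CY$ and using $\|Y^{-1}\| \leq h(Z)$ and $\|X\| \leq g\, h(Z)$, one extracts polynomial bounds on $\|C\|$ and $\|D\|$ in $h(Z)$.

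For $A$ and $B$: expanding $AZ+B = Z'M$ and taking real and imaginary parts yields the explicit formulas
\[
A = \bigl(Y'(CX+D) + X'(CY)\bigr) Y^{-1}, \quad B = X'(CX+D) - Y'(CY) - AX.
\]
Since $\|X'\|$ is bounded by an absolute constant in $F_g$ (from $|X'_{ij}| \leq 1/2$) and $C, D, X, Y, Y^{-1}$ are polynomial in $h(Z)$ by the previous step, these bounds reduce to a polynomial upper bound on $\|Y'\|$ in $h(Z)$.

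The main obstacle is thus to bound $\|Y'\|$ polynomially in $h(Z)$. Since $Y'$ is Minkowski-reduced with all its diagonal entries bounded below by $c_g$, one has $\|Y'\| \asymp Y'_{gg}$ and, by Hadamard and Minkowski, $\det(Y') \asymp \prod_i Y'_{ii}$, giving $\|Y'\| = O(\det Y')$. Using $\det(Y') = \det(Y)|\det M|^{-2}$ together with $\det Y \leq (g\,h(Z))^g$, the problem reduces to a polynomial lower bound $|\det M| \geq h(Z)^{-N(g)}$. I would obtain this by a case analysis on $\mathrm{rank}(C)$: when $C$ has full rank, $|\det M|$ is bounded below using the imaginary-part factor $|\det(CY)|$ together with $|\det C| \geq 1$ and $\det Y \geq h(Z)^{-g}$; when $C$ has lower rank (including the case $C=0$, where $M = D$ is an integer matrix with $|\det M| = 1$), the cocycle identity restricted to $\ker(C)$, together with the integrality of $D$ and the fact that $\ker(C) \cap \ker(D) = 0$ (from invertibility of $M$), produces the required lower bound. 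I expect this case analysis --- and the careful tracking of the polynomial exponents --- to be the main technical step of the proof.
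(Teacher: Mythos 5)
Your overall strategy matches the paper's: both proofs exploit the transformation law for $\mathrm{Im}(Z)$ together with the Minkowski-reduction condition built into $F_g$ to bound $C,D$ first and then $A,B$. Your cocycle identity $M^*Y'M=Y$ is exactly the content of Siegel's equations (87) that the paper cites, read row by row, and your bound $\|M\|^2\leq\|Y\|/\lambda_{\min}(Y')$ is the same estimate in matrix rather than row form. Where you diverge is at the hard step, and your route is genuinely more awkward there. You reduce everything to a polynomial lower bound on $|\det(CZ+D)|$, and propose a rank case analysis; the full-rank case ($|\det M|\geq|\det C|\det Y\geq 1/h(Z)$) and the case $C=0$ ($|\det D|=1$) are fine, but the intermediate-rank case is only gestured at, and filling it in essentially means reproving a piece of Siegel's reduction theory for coprime symmetric pairs. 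The paper avoids ever lower-bounding $|\det M|$: from $\gamma Z\in F_g$ one has $|\det M|\leq 1$ (this is Siegel's (88), i.e.\ $\prod_l y_l\ll|Y|^{-1}$ where $y_l$ are the diagonal entries of the Minkowski-reduced conjugate of $Y'^{-1}$), while Siegel's (87), $y_l=Y^{-1}[Xc_l+d_l]+Y[c_l]$, gives an easy polynomial \emph{lower} bound on each $y_l$ from positive-definiteness of $Y$ and $Y^{-1}$; combining the upper bound on the product with the lower bounds on the factors yields upper bounds on each $y_l$ and hence on $\|c_l\|,\|d_l\|$. This pigeonhole-on-the-diagonal argument is the clean substitute for your $|\det M|$ lower bound, and is the one place where I'd say you should change course rather than push through the rank cases. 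A second, cosmetic difference: having bounded $C,D$, you extract $A,B$ by explicit formulas involving $Y'$, whereas the paper completes $(C_0,D_0)$ to a cheap symplectic $\gamma_1$ and observes $\gamma\gamma_1^{-1}$ is upper triangular, reducing to the auxiliary Minkowski-reduction lemma (\ref{minkowski}); both work once $\|Y'\|$ is under control. Finally, a small slip: $\|Y^{-1}\|$ is not $\leq h(Z)$ as you wrote, only polynomially bounded in $h(Z)$ via $\|Y^{-1}\|\leq\|\mathrm{adj}\,Y\|/\det Y$, but this does not affect anything.
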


\begin{proof}

Set $\gamma=\begin{pmatrix} A & B\\ C & D\end{pmatrix}.$ 
The proof involves going through and quantifying Siegel's proof in \cite{Si} of the fact that $F_g$ is a fundamental domain. As is well known, 
$C,D$ are such that $|det(CZ+D)|$ is minimal, and by perturbing $Z$ slightly we can ensure that this determines $C$ and $D$ up to the left action of 
$\GL_g(\Z)$. Now, as in Siegel pick an element $U\in\GL_g(\Z)$ such that $UY^{-1}U^t$ is  Minkowski reduced, and set 
$$\gamma_0=\begin{pmatrix} A_0 & B_0\\ C_0 & D_0\end{pmatrix}=\begin{pmatrix} U & 0\\ 0&U^{-t}\end{pmatrix}\gamma$$ and $$\gamma_0\cdot Z=X_0+iY_0.$$

Now, set $y_1,y_2,\dots,y_n$ to be the diagonal elements of $Y_0^{-1}$, and $c_l,d_l$ to be the rows of $C_0,D_0$. By \cite{Si}, 
pg. 40, eq. (87), (88) we have 

\begin{equation}\label{Y1}
y_l = Y^{-1}[Xc_l + d_l]+Y[c_l]
\end{equation} and 
\begin{equation}\label{Y2}
\prod_{i=1}^n y_i\ll |Y|^{-1}.
\end{equation} Now, the eigenvalues of $Y$ are polynomial bounded in terms of the co-ordinates
of $Y$, and their product is equal to the determinant of $Y$, hence the inverses of the eigenvalues of $Y$ 
are polynomially bounded in terms of $h(Z)$. Thus,
since integer vectors have euclidean norm at least 1,  equation (\ref{Y1}) implies that $y_l$ is polynomially bounded below by $h(Z)$, 
which is to say that 
$y_l^{-1}$ is polynomially bounded in terms of $h(Z)$.  But  now equation (\ref{Y2}) implies that $y_l$ is polynomially bounded in terms of $h(Z)$, and thus so
are the norms of $c_l$ and $d_l$. Hence all the co-ordinates of $C_0$ and $D_0$ are polynomially bounded in terms of $h(Z)$.

Now, we can find $A_1,B_1$ with polynomially bounded entries such that the matrix $$\gamma_1:=\begin{pmatrix}A_1& B_1\\ C_0 & D_0\end{pmatrix}.$$ is in
$\Sp_4(\Z)$. Set $Z_1=\gamma_1\cdot Z$. There is then an upper triangular matrix $\gamma_2=\gamma\gamma_1^{-1}$ which takes $Z_1$ into $F_g$. 
The lemma now follows from lemma \ref{minkowski}, which is well known but we include for completeness.

\end{proof}

\begin{lemma}\label{minkowski}
Let $U\in\GL_g(\Z)$ be such that $UYU^t$ is Minkowski reduced. Then $U$ is polynomially bounded in terms of $h(Y)=MAX(|y_{ij}|, \frac{1}{|Y|})$.
\end{lemma}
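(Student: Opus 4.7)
My plan is to apply standard Minkowski reduction estimates directly. Write $d_1\le d_2\le\cdots\le d_g$ for the diagonal entries of $M=UYU^t$, let $u_i$ denote the $i$-th row of $U$, and write $\mu_{\min}(Y)\le\mu_{\max}(Y)$ for the extreme eigenvalues of the positive definite matrix $Y$. The two classical facts from reduction theory I rely on, both found in \cite{Si}, are: (i) $d_1=\min_{0\ne v\in\Z^g}vYv^t$, so in particular $d_1\ge\mu_{\min}(Y)$ since $\|v\|\ge 1$ for nonzero integer vectors; and (ii) a product estimate $d_1d_2\cdots d_g\le c_g\det(Y)$ for a constant $c_g$ depending only on $g$.

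From $d_i=u_iYu_i^t\ge\mu_{\min}(Y)\|u_i\|^2$ I get $\|u_i\|^2\le d_i/\mu_{\min}(Y)\le d_g/\mu_{\min}(Y)$, so it suffices to bound both $d_g$ and $\mu_{\min}(Y)^{-1}$ polynomially in $h(Y)$. Using (i), (ii), and monotonicity,
$$d_g\le c_g\det(Y)/d_1^{g-1}\le c_g\det(Y)/\mu_{\min}(Y)^{g-1},$$
so the problem reduces to polynomial bounds on $\det(Y)$ from above and on $\mu_{\min}(Y)^{-1}$ from above.

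Both follow immediately from the definition of $h(Y)$. Indeed $\mu_{\max}(Y)\le\operatorname{tr}(Y)\le g\max_{ij}|y_{ij}|\le g\,h(Y)$, so $\det(Y)\le\mu_{\max}(Y)^g\le(g\,h(Y))^g$; and
$$\mu_{\min}(Y)\ge\det(Y)/\mu_{\max}(Y)^{g-1}\ge 1\big/\bigl(h(Y)(g\,h(Y))^{g-1}\bigr),$$
where the last step uses $|Y|=\det(Y)\ge 1/h(Y)$ from the very definition of $h$. Assembling these bounds controls every $\|u_i\|$, and hence every entry $|U_{ij}|$, polynomially in $h(Y)$.

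The argument is essentially just bookkeeping with the Minkowski reduction inequalities, so I do not anticipate a serious obstacle. The only step requiring any care is quoting (i) and (ii) in a form applicable to arbitrary Minkowski-reduced positive definite forms, but both are classical and in \cite{Si}.
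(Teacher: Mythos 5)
Your proof is correct and follows essentially the same route as the paper's: bound the diagonal entries $y_l = Y[u_l]$ of $UYU^t$ from below via $\|u_l\|\ge 1$ and the smallest eigenvalue of $Y$, then use the Minkowski product inequality $\prod y_i \ll \det Y$ to bound them from above, and finally deduce a bound on $\|u_l\|$. Your write-up simply makes explicit the eigenvalue estimates ($\mu_{\max}\le \operatorname{tr} Y$, $\mu_{\min}\ge \det Y/\mu_{\max}^{g-1}$, $\det Y \ge 1/h(Y)$) that the paper leaves implicit.
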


\begin{proof}

Let $Y'=UYU^t$, and set $y_1,\dots,y_n$ to be the diagonal elements of $Y'$. Letting $u_l$ be the rows of $U$, we have $Y[u_l]=y_l$, and as before

\begin{equation}\label{Y3}
\prod_{i=1}^n y_i\ll |Y'|=|Y|.
\end{equation}

Now, since the $u_l$ have euclidean norm at least 1, we have that the $y_l^{-1}$ are polynomially bounded in terms of $h(Y)$, and hence by equation \ref{Y3}
the $y_l$ are polynomially bounded in terms of $h(Y)$. Thus we conclude that the euclidean norms of the $u_l$ are polynomially bounded in terms of $h(Y)$,
 which implies the lemma.

\end{proof}

$\newline$\textbf{Case ii: General A} $\newline$

In general, there are simple Abelian varieties $A_i$ of dimension $g_i$ with complex multiplication by $K_i$ of CM type $S_i$  such that $A$ is isogenous to 
$$\prod_i A_i^{n_i}$$ so that $\sum_i n_ig_i = g$. We assume that the types $(K_i,S_i)$ are inequivalent (which does NOT mean that the fields $K_i$ are 
all distinct!) so that $$End(A)\subset \prod_i M_{g_i}(K_i)$$ and $$R\subset \oplus_{i} O_{k_i}.$$  For simplicity of notation we set $O_A=\oplus_{i} O_{k_i}$.
As before, define $e_R$ to be the index of $R$ in $O_A$ and so that $$\Disc(R)=e_R^2\cdot\prod_i \Disc(k_i).$$

There is an embedding $$\psi_S : \oplus_i K_i^{n_i}\rightarrow \C^g$$ given by $\psi_S=\oplus_i \psi_{S_i}^{n_i}$, and a lattice $I\subset \oplus_i K_i^{n_i}$ such
that $$A(\C)\cong  \C^{g}/\psi_S(I).$$ Moreover, $I$ is invariant under multiplication by $R$. Consider $J=O_A\cdot I$. Then $J$ is an $O_A$ module with
$$e_R\cdot J\subset I\subset J.$$ We thus have a direct sum decomposition $$J=\oplus_i J_i$$ with $J_i$ an $O_{K_i}$ ideal, and so in fact we can decompose further
$$J_i = \oplus_{j=1}^{n_i} P_{ij}$$ with each $P_{ij}$ an $O_{K_i}$ module. By scaling with elements of $K_i^{\times}$, we can guarantee that
$P_{ij}\subset O_{k_i}$ of index at most $\Disc(K_i)^{1/2}$. Therefore, there is an integer $N$ polynomially bounded in terms of $\Disc(O_A)$ such that
the ring $$\oplus_i\left(O_{K_i} + N\cdot M_{n_i}(O_{K_i})\right)$$ preserves $J$, and thus the ring
$$\Z+Ne_R\oplus_i\cdot M_{n_i}(O_{K_i})$$ preserves $I$. 

The following lemma allows us to reduce to the case where $A\cong A_1^{n_1}$. 

\begin{lemma}\label{generalab}

There are principally polarized Abelian varieties $B_i$ isogenous to $A_i^{n_i}$, and an isogeny $\lambda:A\rightarrow\oplus B_i$ compatible
with polarizations such that $\lambda$ has degree polynomially bounded in terms of $\Disc(O_A)$.

\end{lemma}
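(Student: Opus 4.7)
The plan is to exploit the fact that the simple factors $A_i$ are pairwise non-isogenous to decompose the polarization on $A$ along the factors, and then to principally polarize each factor separately with controlled index.

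\emph{Decomposition of the polarization.} The Rosati involution on $\textrm{End}^0(A) \cong \prod_i M_{n_i}(K_i)$ preserves the center $\prod_i K_i$ and hence permutes the primitive central idempotents $e_i$. Since the factors $A_i$ are pairwise non-isogenous, this permutation must be trivial, so each $e_i$ is Rosati-invariant. Consequently, when the Riemann form $E$ of the principal polarization on $A$ is extended to $V = I \otimes \Q = \oplus_i V_i$ with $V_i = K_i^{n_i}$, the identity $E(e_i v, w) = E(v, e_i w)$ forces $E(V_i, V_j) = 0$ for $i \neq j$. Thus $E = \oplus_i E_i$ splits as a direct sum of alternating forms $E_i$ on the $V_i$.

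\emph{Principal polarization of each factor.} Working with the overlattice $J = O_A \cdot I = \oplus_i J_i$, we have $e_R J \subset I \subset J$, so $[J:I] \leq e_R^{2g}$ and $e_R^2 E$ is $\Z$-valued on $J$. From $\det(E|_I) = 1$ we get $\prod_i \det(E_i|_{J_i}) = [J:I]^{-2}$; combined with the lower bound $\det(E_i|_{J_i}) \geq e_R^{-4 g_i n_i}$ (from non-degeneracy and integrality of $e_R^2 E_i$ on $J_i$), each $\det(E_i|_{J_i})$ is polynomially bounded in $\Disc(O_A)$ from both sides. Applying the elementary-divisor theory for alternating forms factor-by-factor, we find lattices $L_i \subset V_i$, commensurable with $J_i$ via indices polynomially bounded in $\Disc(O_A)$, on which each $E_i$ restricts to a principal alternating form.

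\emph{The isogeny and main obstacle.} Setting $B_i := \C^{n_i g_i}/\psi_{S_i}(L_i)$, each $B_i$ is a principally polarized abelian variety isogenous to $A_i^{n_i}$. Since $I$ and $\oplus_i L_i$ are commensurable lattices in $V$ with index polynomially bounded in $\Disc(O_A)$, after a small-index adjustment to ensure $I \subset \oplus_i L_i$ we obtain an isogeny $\lambda: A \to \oplus_i B_i$ of degree $[\oplus_i L_i : I]$, which is polynomially bounded in $\Disc(O_A)$. Compatibility with polarizations is immediate since $\oplus_i E_i$ restricts to $E$ on $I$. The main technical obstacle is the simultaneous linear-algebraic principalization of each $E_i$ with the constraint that $I$ embeds into $\oplus_i L_i$: because the individual $\det(E_i|_{J_i})$ need not be reciprocals of integer squares, the $L_i$ must be constructed by a careful mix of overlattice and sublattice adjustments of $J_i$, all of which stay polynomially controlled thanks to the two-sided bounds on $\det(E_i|_{J_i})$ established above.
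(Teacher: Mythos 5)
Your proposal follows essentially the same route as the paper: pass to $J = O_A\cdot I$ (a controlled-index overlattice that splits into $O_{K_i}$-modules), observe that the pairwise non-isogeny of the $A_i$ forces the polarization to split across the factors, and then principalize each factor by a controlled-degree isogeny. The paper's observation that ``the $A^0_i$ have no non-trivial homomorphisms between them, so $\eta$ splits as $\oplus_i\eta_i$'' is the same fact you establish via Rosati-invariance of the central idempotents; the two are interchangeable.

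The only real difference is cosmetic but worth noting. The paper phrases everything at the level of abelian varieties and isogenies: it first forms $A^0 = \C^g/\psi_S(J)$ and transports the principal polarization along $\lambda^0: A\to A^0$ to get a genuine (integer-valued, positive) polarization $\eta$ of degree $\deg(\lambda^0)^2$, which splits as $\oplus_i\eta_i$; then each $(A^0_i,\eta_i)$ admits an isogeny to a principally polarized $B_i$ of degree $\deg(\eta_i)^{1/2}$. Because each $\eta_i$ is an honest polarization on $A^0_i$, its type is a vector of positive integers and the Pfaffian bookkeeping is automatic. Your version works directly with the rational-valued restriction $E_i|_{J_i}$, which is why you run into the issue that $\det(E_i|_{J_i})$ need only be a rational square rather than an integer square, and why your ``main obstacle'' paragraph ends somewhat open-ended. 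This obstacle is not genuine --- the lattice $L_i$ obtained by rescaling a symplectic basis of $(J_i, E_i)$ is commensurable with $J_i$ via an index controlled by the numerator and denominator of the Pfaffian, both of which you have bounded --- but the isogeny-theoretic formulation sidesteps the fractional bookkeeping entirely, which is why the paper's proof reads more cleanly. If you tighten the last paragraph (e.g.\ by first clearing denominators to make $E$ integral on $J$ before invoking the elementary-divisor step, or by phrasing the principalization as an isogeny of the polarized factor as in the paper), the argument is complete.
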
 

\begin{proof}
Consider $I_0 = O_A\cdot I$, and let $A^0$ be the Abelian variety whose complex points are $$\C^{g}/\psi_S(I_0).$$ Thus $A$ has an isogeny  $\lambda^0$ 
to $A^0$ of degree polynomially bounded in terms of $\Disc(O_A)$. Since $A^0$ has an action by $O_A$, it splits as $A^0\cong\oplus_i A^0_i$, where
$A^0_i$ has an action of $O_{K_i}$, and has  CM type $(K_i,S_i)$. 
The polarization on $A$ induces a polarization $\eta$ on $A^0$ via $\lambda^0$ of degree polynomially bounded in terms of $\Disc(O_A)$, and as the $A^0_i$ have
no non-trivial homomorphisms between them, $\eta$ splits as $\eta\cong\oplus_i \eta_i$. There is then an isogeny from $A^0_i$ to $B_i$ of degree
 $deg(\eta)^{\frac12}$ such that $B_i$ is principally polarized. Composing with $\lambda^0$ completes the  proof.
 \end{proof}
 
 If $Z_1,Z_2$ are 2 points in the fundamental domain $F_g$ corresponding to principally polarized Abelian varieties with an isogney of degree $C$ between them,
 then  $\frac{H(Z_1)}{H(Z_2)}$ is polynomially bounded in terms of $C$. Thus, by lemma \ref{generalab} we can and do restrict to the case where $A\cong A_1^{n_1}$. 
 
 Now, as before we can and do assume that $I\subset O_{K_1}^{n_1}$, $I$ is invariant by 
$\Z+ Ne_R M_{n_1\times n_1}(O_{K_1})$, where $N$ is an integer polynomially bounded in terms of $\Disc(K_1)$, 
and $[O_{K_1}^{n_1}:I]$ is polynomially bounded in terms of $\Disc(R)$.
Then the polarization on $A$ is given by a matrix $E\in M_{g\times g}(K_1)$ satisfying $E=-E^*$, where $E^*$ denotes the transpose-conjugate matrix.
As the symplectic form defined by $E$ takes integer values on $I$, we deduce that there is an integer polynomially bounded by $\Disc(R)$, which we may
take to be $N$, such that $NE$ has entries which are algebraic integers. Moreover, as $E$ defines a principal polarization of $A$, the determinant of $E$
is polynomially bounded in terms of $\Disc(R)$.

Moreover, as before let $\zeta\in O_{K_1}$ be a totally imaginary element, with entries polynomially bounded by $\Disc(K_1)$ and $-i\phi(\zeta)>0$ for all 
$\phi\in S_1$. Then the quadratic form $Q$ on $K_1^n$ defined by $$Q(v_1,v_2) = tr_{K_1/\Q}(\zeta\cdot v_1 E v_2^*)$$ is positive definite.

\begin{lemma} \label{hermitian}

There exists an invertible matrix $g\in M_{g\times g}(O_{K_1})$ such that $gEg^{*}$ has entries all of whose conjugates are
polynomially bounded in terms of $\Disc(R)$. 

\end{lemma}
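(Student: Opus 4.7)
The plan is to apply Minkowski-style reduction to the positive definite quadratic form $Q$ associated to the Hermitian matrix $H:=\zeta E$, then transfer the resulting bounds to $gEg^*$ via Cauchy-Schwarz. Note first that $H$ is Hermitian, since $H^*=\zeta^* E^*=(-\zeta)(-E)=H$, and that for each $\phi\in S_1$ the complex matrix $\phi(H)$ is positive definite (this is the standard positive definiteness of the Riemann form attached to a principal polarization, restricted to the $\phi$-isotypic component of $\C^g$).

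I would first verify that the $\Z$-determinant of $Q$ on the lattice $O_{K_1}^{n_1}$ (of rank $2g$) is polynomially bounded in $\Disc(R)$, via the relation
$$\det\nolimits_{\Z} Q \;\asymp\; \Disc(K_1)^{n_1}\cdot |N_{K_1/\Q}(\det H)|,$$
together with the polynomial bound on $|\det E|$ already established in the paragraph preceding the lemma and the polynomial bound on the conjugates of $\zeta$. Then by Minkowski's theorem on successive minima there exist $\Z$-linearly independent vectors $u_1,\dots,u_{2g}\in O_{K_1}^{n_1}$ with $\prod_k Q(u_k)\ll \det_{\Z}Q$, and so each $Q(u_k)$ is polynomially bounded in $\Disc(R)$. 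By basic linear algebra some subcollection $v_1,\dots,v_{n_1}$ of the $u_k$ is $K_1$-linearly independent in $K_1^{n_1}$; let $g$ be the matrix with rows $v_i$, so that $g\in M_{n_1\times n_1}(O_{K_1})$ with $\det g\ne 0$ in $K_1$.

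To bound the conjugates of $(gEg^*)_{ij}=v_iEv_j^*$, I would first bound those of $v_iHv_j^*$. For each $\phi\in S_1$, Cauchy--Schwarz applied to the positive definite Hermitian form $\phi(H)$ on $\C^{n_1}$ gives
$$|\phi(v_iHv_j^*)|\;\le\;\phi(v_iHv_i^*)^{1/2}\,\phi(v_jHv_j^*)^{1/2}.$$
Since $H$ is Hermitian we have $v_iHv_i^*\in F_1$ and $Q(v_i)=2\sum_{\psi\in S_1}\psi(v_iHv_i^*)$ with each summand positive; hence each $\phi(v_iHv_i^*)$ is polynomially bounded. Embeddings $\phi\notin S_1$ give complex conjugates of these values and are bounded symmetrically. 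Dividing by $\phi(\zeta)$ converts the bound on $\phi(v_iHv_j^*)$ into the desired bound on $\phi(v_iEv_j^*)$; the necessary polynomial lower bound on $|\phi(\zeta)|$ comes from $|N_{K_1/\Q}(\zeta)|\ge 1$ (as $\zeta\in O_{K_1}\setminus\{0\}$) combined with the polynomial upper bound on each $|\phi(\zeta)|$.

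The main technical irritant will be precise bookkeeping of the discriminant exponents — in particular, matching the Minkowski successive-minima constant against the explicit formula for $\det_{\Z} Q$, and keeping track of the index $[O_{K_1}^{n_1}:I]$ absorbed into $\Disc(R)$ — but the structural idea is just Minkowski reduction of the associated positive definite form followed by Cauchy-Schwarz transfer.
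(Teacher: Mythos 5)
Your proposal is correct and is essentially the proof given in the paper: both reduce the positive-definite trace form $Q$ on $O_{K_1}^{n_1}\cong\Z^{2g}$ to obtain $\Z$-independent vectors with polynomially bounded $Q$-values (the paper via a Siegel-set argument as in Lemma~\ref{polybasis}, you via Minkowski's successive minima, which are interchangeable here given the lower bound $Q\ge 1/N$), select a $K_1$-independent subset to form $g$, and then bound the off-diagonal entries of the Hermitian positive-definite matrices $\phi(\zeta gEg^*)$ by their diagonal entries via Cauchy--Schwarz before dividing out $\zeta$.
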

\begin{proof}

Consider the quadratic form  $$Q(v_1,v_2) = tr_{K_1/\Q}(\zeta\cdot v_1 E v_2^*)$$ as a positive-definite quadratic form on ${O_K}^{n_1}$ thought of as
as $\Z^{n_1\cdot[K:\Q]}$. Since $NE$ has entries which are algebraic integers, the smallest non-zero value $Q$ can take is $\frac{1}{N}$.
By repeating the proof of \ref{polybasis}, we can find a basis of $n_1\cdot[K:\Q]$ elements $v_i$ in $O_K^{n_1}$ such that
$Q(v_i,v_i)$ is polynomially bounded by the determinant of $Q$, which is in turn polynomially bounded in terms of $\Disc(R)$. Pick a subset
 $$\{w_j,1\leq j\leq n_1\}$$
of the $v_i$ which are linearly independant over $K$, and make them the rows of $g$. For $\phi\in S_1$, Consider the positive definite matrix
$E_\phi:=\phi(\zeta\cdot gEg^{*})$. By construction, $E_{\phi}$ is hermitian, positive definite, and has diagonal elements polynomially bounded in terms of
$\Disc(R)$. Thus all the enties are automatically polynomially bounded in terms of $\Disc(R)$. As the conjugates of $\zeta^{-1}$ are also polynomially
bounded in terms of $\Disc(R)$, this completes the proof.
\end{proof}

Take $g$ as in lemma \ref{hermitian}, and note that $g$ must have determinant polynomially bounded in terms of $\Disc(R)$. We can thus replace $(I,E)$ by
$(g^{-1}(I), gEg^*)$. Now we can pick a basis for $g^{-1}(I)$ of vectors whose entries have all their conjugates and 
denominators polynomially bounded in terms of $\Disc(R)$ as in lemma \ref{polybasis}. The rest of the proof follows as in case i.

\end{proof}

\section{Ax-Lindemann-Weierstrass}
The aim of this section is to prove theorem \ref{AL}. We begin by showing that we can restrict our attention to complex algebraic varieties:

\begin{lemma}\label{semical}
Let $Z$ be a complex analytic submanifold of $\C^n$, and $W\in Z$ be a maximal irreducible semi-algebraic set. Then $W$ is a subset of a
 complex analytic subvariety of $\C^n$ of the same dimension as $W$.
 \end{lemma}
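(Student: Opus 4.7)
My plan is to identify $W$ with the complex algebraic Zariski closure of its real Zariski closure, using the maximality hypothesis. First I would let $\tilde V$ denote the Zariski closure of $W$ in $\C^n\cong\R^{2n}$ as a real algebraic subset; since $W$ is irreducible semialgebraic, $\tilde V$ is irreducible real algebraic. I then set $V$ to be the complex algebraic Zariski closure of $\tilde V$ (equivalently, of $W$) in $\C^n$; a short argument using the irreducibility of $\tilde V$ shows that $V$ is itself complex irreducible.

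The main step is to show $V$ is the smallest complex analytic subvariety of $\C^n$ containing $W$. I expect to argue that at a generic smooth point $x\in\tilde V$, the $\C$-linear span $T_x\tilde V+iT_x\tilde V$ fills up the tangent space $T_xV$: if this failed on a dense subset of $\tilde V$, the holomorphic extensions of $\tilde V$ near its smooth points would assemble into a complex analytic subvariety of $V$ of strictly smaller complex dimension, whose complex algebraic Zariski closure would be a proper complex algebraic subvariety of $V$ containing $\tilde V$, contradicting the minimality of $V$. Consequently, at generic $x$ the germ of $V$ coincides with the smallest complex analytic germ containing the germ of $\tilde V$, and since $V$ is complex irreducible any complex analytic subset of $V$ with nonempty Euclidean interior in $V$ equals $V$. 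Thus any complex analytic $V'\supset W$ satisfies $V'\supset V$; applied with $V'=Z$ this yields $V\subset Z$. Then $V$ is a semialgebraic (in fact complex algebraic) subset of $Z$ containing $W$, and the maximality of $W$ forces $W$ to be an irreducible component of $V$; since $V$ is irreducible, $W=V$. Hence $W$ is itself a complex algebraic subvariety of $\C^n$ of its own dimension, and the conclusion holds with the displayed complex analytic subvariety equal to $W$ itself.

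The main obstacle will be the tangent-space identification $T_x\tilde V+iT_x\tilde V=T_xV$ at generic smooth $x\in\tilde V$, together with the related claim that the germ of $V$ at $x$ agrees with the smallest complex analytic germ containing the germ of $\tilde V$. These require careful attention to the smooth locus of $\tilde V$, the generic constancy of the complex dimension of the holomorphic extension, and the behavior of analytic continuation inside the irreducible complex variety $V$; one may also want to bring in the Peterzil-Starchenko Chow-type theorem \cite{PS1} if a more direct o-minimal route is preferred, applied to the minimal complex analytic subvariety of $\C^n$ containing $W$ to force its algebraicity.
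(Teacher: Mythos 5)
Your approach is essentially the same as the paper's. The paper also starts from the real Zariski closure $U$ of $W$, complexifies it, and uses a tangent-space argument together with maximality. Concretely, the paper forms the complexified image $i_{\C}(U(\C))\subset\C^n$ (this is exactly your $V$, the complex Zariski closure of $\tilde V$) and then pulls $Z$ back along $i_{\C}$ to get a complex analytic subset $Y$ of $U(\C)$; since $Y$ contains the totally real slice $U(\R)\cong\R^m$ near a smooth point $O$ of $W$, its complex tangent space at $O$ must be all of $\C^m$, so $Y$ fills out $U(\C)$ near $O$, i.e.\ $Z$ contains $i_{\C}(U(\C))=V$ near $O$. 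Maximality then finishes, just as in your last paragraph. Where you spend effort proving that the germ of $V$ at a generic smooth point is the local holomorphic hull of $\tilde V$ (the potential soft spot you flag yourself, regarding assembling local extensions), the paper sidesteps this by running the tangent-space computation directly inside $U(\C)$ on the pullback $Y=i_{\C}^{-1}(Z)$, which is cleaner and avoids any patching issue. Your sharper formulation that maximality forces $W$ to coincide with (a component of) $V$, rather than merely have the same dimension, is implicit in the paper and is indeed how the lemma gets used. One small imprecision: the analytic set whose irreducibility you invoke is the analytic variety underlying the algebraic variety $V$, so you should note that an irreducible complex algebraic variety is analytically irreducible (its smooth locus is connected and dense); and when applying the maximality axiom, the relevant semialgebraic $Y'$ should be the appropriate open piece of $V$ inside $Z$ rather than $V$ itself.
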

 
 \begin{proof}
 
Take $U$ to be the zariski closure of $W$, and let $O\in W$ be a smooth point of $X$. Let $m=\dim U=\dim W$. Let $z_1,z_2,\cdots,z_n$ be the usual co-ordinates 
on $\C^n$, with $x_j,y_j$ being real co-ordinates such that $z_j=x_j+iy_j$ as usual. We first want to 'complexify' $U$ into a complex 
variety inside $\C^n$.  Since $U$ is a real algebraic variety over $\R$, we consider the set of its complex points $U(\C)$ as an abstract complex algebraic variety. Moreover,
the inclusion map $i:U\rightarrow\R^{2n}$ is given by $n$ pairs of polynomial maps $(f_i,g_i)$ from $U$ to $\R$, so that 
$i(u)=(f_1(u),g_1(u),\dots,f_n(u),g_n(u))$. Thus we can consider the complexified map $i_{\C}:U(\C)\rightarrow\C^{2n}$ via 
$$i_{\C}(u) = (f_1(u)+ig_1(u),\dots,f_n(u)+ig_n(u)).$$ The map $i_{\C}$ is the identity map on the real points $U(\R)$, 
and its image on the whole of $U(\C)$ is a complex algebraic variety\footnote{For those familiar with Weil restriction, this simply reflects that Weil restriction
is the right adjoint to the base change functor.}. 

Now, Pick local \emph{real} co-ordinates $u_1,\dots,u_m$ for $U$ around $O$, so that the $u_i$ become \emph{complex} co-ordinates for $U(\C)$ around $O$.
Define $Y$ to be the pullback of $Z$ along $i_{\C},$ so that $$Y:=i_{\C}^{-1}(Z\cap i_{\C}U(\C)).$$ Then $O\in Y$, and locally around $O$ 
in the co-ordinates $u_i$, $Y$ is a complex manifold which contains $\R^m$.  Since $Y$ is a complex manifold its tangent space at $O$ is a complex subspace,
and since it contains $\R^m$ it must be all of $\C^m$. Thus $Y$ contains an open neighbourhood of $U(\C)$, and thus $Z$ contains an open neighbourhood of 
$i_{\C}(U(\C))$. Since $W$ was assumed to be maximal, $W$ must be of the same dimension as $i_{\C}(U(\C))$, and this completes the proof.

\end{proof}

We shall make use the following 2 lemmas:

\begin{lemma}\label{UY}
Suppose $W\in \mathcal{A}_{2,1}$ is an algebraic variety such that $\pi^{-1}(W)$ has an algebraic component. Then $W$ is weakly special.
\end{lemma}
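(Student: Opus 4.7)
The strategy is to follow Ullmo--Yafaev's characterization of weakly special subvarieties via algebraicity of analytic components. Let $Y \subseteq \pi^{-1}(W)$ be the given irreducible complex algebraic component; since $\pi^{-1}(W)$ is $\Gamma$-invariant for $\Gamma = \Sp_4(\Z)$, the image $\pi(Y)$ is Zariski dense in $W$. Let $N = \{\gamma \in \Gamma : \gamma Y = Y\}$ be its setwise stabilizer, and let $H$ be the $\Q$-Zariski closure of $N$ inside $\Sp_4$, with $H^\circ$ denoting its identity component.

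The first step is to show that $H^\circ(\R)$ preserves $Y$ and that $Y$ is the closure of an $H^\circ(\R)$-orbit. The action of $\Sp_4(\R)$ on $\Hh_2$ extends, via the Borel embedding, to an algebraic action of $\Sp_4(\C)$ on the compact dual $\check{\Hh}_2$, so the stabilizer of the Zariski closure of $Y$ in $\check{\Hh}_2$ is a complex algebraic subgroup, whose intersection with $\Sp_4(\R)$ is a real algebraic subgroup containing $N$, and therefore containing all of $H(\R)$. One then verifies, using that $N$ is arithmetic (hence Zariski dense in $H$ by Borel density) and that $H$ is reductive (by Mostow), that $(H, X_H)$ forms a Shimura sub-datum of $(\Sp_4, \Hh_2)$, where $X_H$ is taken to be the $H(\R)$-orbit of a point of $Y$. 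The orbit structure together with the fact that $Y$ is complex-algebraic of maximal dimension forces $Y$ to coincide with a full $H^\circ(\R)$-orbit component.

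The final step, which is the main obstacle, invokes the structure theorem for Shimura sub-data in a form due to Ullmo--Yafaev: the adjoint form decomposes as $(H^{ad}, X_H^{ad}) = (H_1, X_1) \times (H_2, X_2)$ in such a way that the complex-algebraic orbit $Y$ corresponds to a component of $X_1 \times \{y_2\}$ for some $y_2 \in X_2$. The difficulty here is that a generic real Lie group orbit in $\Hh_2$ is only real-analytic; the requirement that $Y$ be \emph{complex} algebraic is a strong Hodge-theoretic constraint which, combined with the axioms of a Shimura datum applied to $(H, X_H)$, rules out any nontrivial motion in a second factor and forces the trivial-projection point $\{y_2\}$. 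Projecting back then exhibits $W = \pi(Y)$ as the image of a component of $X_1 \times \{y_2\}$, i.e. as a weakly special subvariety in the sense of the definition above.
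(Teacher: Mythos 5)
The paper does not prove this lemma; its proof is a one-line citation to the main theorem of Ullmo--Yafaev, \emph{A characterization of special subvarieties} (reference [UYS] in the bibliography), and the result is used as a black box. Your proposal instead attempts to reconstruct the Ullmo--Yafaev proof from scratch, which is a much more ambitious task than what the paper actually does here.

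Your sketch captures the expected overall architecture of that argument --- stabilizer of $Y$, $\Q$-Zariski closure $H$, Shimura sub-datum $(H,X_H)$, and the adjoint product decomposition exhibiting $Y$ as a factor orbit --- but there is a substantive gap precisely at the point where the hard work lies. You set $N=\{\gamma\in\Gamma:\gamma Y=Y\}$ and assert, without argument, that ``$N$ is arithmetic (hence Zariski dense in $H$ by Borel density).'' Zariski density of $N$ in its own Zariski closure $H$ is a tautology; what actually has to be \emph{proved}, and is the entire content of the theorem, is that $H$ is large --- positive-dimensional, of the right reductive type, and acting transitively on $Y$. A priori, for an arbitrary algebraic $Y\subset\pi^{-1}(W)$, there is no reason for $N$ to be anything other than finite, and if $N$ is finite then $H$ is finite, no Shimura sub-datum arises, and your construction yields no conclusion about $W$. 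It is exactly at this step that Ullmo--Yafaev use the interplay between the algebraicity of $Y$, the $\Gamma$-invariance of $\pi^{-1}(W)$, the monodromy of $W$, and the geometry of totally geodesic subvarieties; none of that appears in your sketch. There is also a secondary technical issue: you deduce $H(\R)\subset\Theta_Y$ (the real-algebraic stabilizer of $Y$) from $N\subset\Theta_Y$, but $H$ is defined as the $\Q$-Zariski closure of $N$, which can properly contain its $\R$-Zariski closure, so this step requires an additional argument that $\Theta_Y$ is in fact a $\Q$-group.
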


\begin{proof}
This is the main theorem of \cite{UYS}.
\end{proof}

Here, $Y$ is an algebraic subvariety of $\Hh_2$ if $Y=\tilde{Y}\cap\Hh_2$ for some algebraic subvariety $\tilde{Y}\subset \C^3$ 
\cite{UYS}. 
In view of lemma 4.1, the condition that  a component of $\pi^{-1}(W)$ is algebraic 
is equivalent to it being semialgebraic.

\begin{lemma}\label{defineableanalytic}
Let $W$ be a complex algebraic variety, and $D\in W$ be a definable,  globally complex analytic subset. Then $D$ is algebraic.
\end{lemma}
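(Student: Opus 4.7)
My plan is to reduce to Chow's classical theorem via an extension-across-the-boundary argument. Since algebraicity is local on $W$, I may first assume $W$ is irreducible and quasi-projective, and then embed $W$ as a locally closed subvariety of some projective space $\mathbb{P}^N$. It then suffices to show that the Euclidean closure $\bar D$ of $D$ in $\mathbb{P}^N$ is a complex analytic subset; Chow's theorem then yields that $\bar D$ is algebraic, and hence so is $D = \bar D \cap W$.

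The closure $\bar D$ remains definable in the ambient o-minimal structure, so by the standard o-minimal dimension theorem the frontier $\bar D \setminus D$ is definable of real dimension strictly less than $\dim_{\R} D$. Moreover, since $D$ is assumed to be a \emph{globally} complex analytic subset of $W$, it is closed in $W$; thus $\bar D \cap W = D$ and $\bar D \setminus D$ is contained in the algebraic subvariety $\mathbb{P}^N \setminus W$. In particular the frontier is a definable subset of an algebraic set of sufficiently small dimension, which is the geometric input Remmert-Stein-type extension theorems require.

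It remains to show that $\bar D$ is complex analytic at each point of $\mathbb{P}^N \setminus W$. The key analytic input is that definability in an o-minimal structure forces $D$ to have finite volume in $\mathbb{P}^N$ with respect to the Fubini-Study metric: by the Wirtinger formula this volume is the integral of a bounded smooth form over the definable set $D$, which is finite because definable sets in a bounded region have finite volume. Thus $D$ defines a positive closed current of finite mass on $\mathbb{P}^N$. By the Bishop-Stoll extension theorem, such a current extends trivially across the lower-dimensional frontier while remaining closed, and by the Harvey-Shiffman criterion a positive closed current of integral bidegree on $\mathbb{P}^N$ with analytic regular locus is integration along an analytic cycle. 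This forces $\bar D$ to be complex analytic, and Chow completes the proof.

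The main obstacle will be formalizing the finite-volume and current-extension step and matching the dimension bound from o-minimality with the codimension hypothesis of Bishop-Stoll. This is exactly the technical core carried out by Peterzil and Starchenko in \cite{PS1}, and I would appeal to their theorem rather than reproduce the analytic extension argument from scratch.
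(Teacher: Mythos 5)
Your proposal is correct and follows essentially the same route as the paper: reduce to the closure inside a projective compactification, extend the analytic set across the definable boundary of smaller dimension, and then invoke Chow's theorem, with the extension step delegated to Theorem 5.3 of \cite{PS1}. Your Wirtinger/Bishop--Stoll/Harvey--Shiffman sketch is a reasonable gloss on the analytic content underlying the Peterzil--Starchenko result, but since you ultimately appeal to \cite{PS1} for that step, the logical skeleton of your argument coincides with the paper's.
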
	

\begin{proof}

By taking an affine open set in $W$, it suffices to consider the case where $W$ is an affine subset of projective space.
Now, one can express $W$ as $M\backslash E$ where $M$ is a projective variety and $E$ is an algebraic subvariety of $M$.
Theorem 5.3 in \cite{PS1} then applies that the closure of $D$ in $M$ is a definable, globally analytic subset of $M$, and thus $D$ must be algebraic
by Chow's theorem.
\end{proof}

\textbf{Proof of Theorem  \ref{AL}}: First, by lemma \ref{semical} we can assume that the Zariski closure of $Y$ is complex algebraic.
Note also that if $dim Z = dim Y$, then $Z$ must equal $Y$ and be semialgebraic itself, so that we are done
by lemma \ref{UY}. We can thus assume that $dim Z = 2$ and $dim Y = 1$. 

Define $Z^0$ to be the connected component of $Z$ containing $Y$. Now consider a fundamental domain $F_2$ which intersects $Z^0$ and define $Z_0=Z\cap F_2$. 
We know that $Z_0$ is definable in $\R_{an,exp}$ by the main result of \cite{PS}. From now on
we say definable to mean definable in $\R_{an,exp}$. 


We define

$$X = \{g\in SP_{2g}(\R)\mid dim_{\C}(g\cdot Y\cap Z_0) = 1\}.$$

$X$ is a definable subset of $SP_{2g}(\R)$. Moreover, set $\Gamma_0\subset\Gamma$ to be the monodromy group of $V$, so that $\Gamma_0$ preserves a connected 
component of $Z$. Then for all elements of $g\in\Gamma_0$ such that $Y\cap gF$ is not empty, we must have $g\in X$. 

If $V$ is not hodge-generic in $\mathcal{A}_{2,1}$, it must be contained in a 2-dimensional special subvariety, which would mean that $Z$ is special, contradicting the 
maximality of $Y$. Therefore $V$ is hodge-generic, and so $\Gamma_0$ is Zariski dense in $Sp_4(\R)$.

Now, since $X$ is definable it admits an analytic cell decomposition \cite{DM}. Thus $X$ is a union of finitely many irreducible, definable components 
$$X=\cup_{i=1}^m X_i,$$ such that each $X_i$ is real-analytically homeomorphic to an open ball of some dimension. Note that some of the $X_i$ may be points.
By analytic continuation, we have $X_i\cdot Y\subset Z$ for all $1\leq i\leq m$. 
\subsection{ Case 1: $\forall 1\leq i\leq m, \dim_{\R} X_i\cdot Y = 2$}

Since everything is locally real analytic, we must have $\forall 1\leq i\leq m, X_i\cdot Y = x_i\cdot Y$, where $x_i\in X_i$ is an arbitrary point.

\begin{lemma}\label{closed-image}
Under the assumptions above, $\pi(Y)$ is an algebraic subvariety of $V$.
\end{lemma}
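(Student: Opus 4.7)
The plan is to exhibit $\pi(Y)$ as a definable, globally complex analytic subset of the algebraic variety $V$ and then apply Lemma \ref{defineableanalytic} to conclude it is algebraic.

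The Case 1 hypothesis $X_i \cdot Y = x_i \cdot Y$ (for any $x_i \in X_i$) forces each connected cell $X_i$ to lie in a single left coset of the stabilizer $H = \{g \in \Sp_4(\R) : g\cdot Y = Y\}$, so $X \subset \bigcup_{i=1}^m x_i H$ and only the finitely many distinct translates $x_1 Y, \ldots, x_m Y$ appear as $gY$ with $g \in X$. Let $I = \{i : X_i \cap \Gamma \neq \emptyset\}$ and, for $i \in I$, pick $\gamma_i \in X_i \cap \Gamma$; then $\gamma_i Y = x_i Y$, and crucially $\pi(\gamma_i Y) = \pi(Y)$ since $\gamma_i \in \Gamma$. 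The set $I$ is nonempty because elements of $\Gamma_0$ sending points of $Y$ into $F_2$ lie in $X$ by construction.

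Next I claim that, up to a lower-dimensional contribution on $\partial F_2$,
\[
\pi^{-1}(\pi(Y)) \cap F_2 \;=\; \bigcup_{i \in I}\bigl(\gamma_i Y \cap F_2\bigr).
\]
Inclusion $\supseteq$ is immediate from $\gamma_i \in \Gamma$. For $\subseteq$, given $z$ in the interior of $F_2$ with $\pi(z) \in \pi(Y)$, write $z = \gamma y$ with $\gamma \in \Gamma$, $y \in Y$; the curve $\gamma Y$ then meets the interior of $F_2$, so the intersection $\gamma Y \cap Z_0$ has complex dimension one, placing $\gamma$ in $X$ and hence in some $X_i$ with $i \in I$, whence $\gamma Y = \gamma_i Y$. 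The right-hand side is a finite union of semialgebraic sets intersected with the definable set $F_2$, so it is definable, and applying the definable projection $\pi|_{F_2}$ of \cite{PS} exhibits $\pi(Y) = \bigcup_{i \in I} \pi(\gamma_i Y \cap F_2)$ as a definable subset of $V$.

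Finally, $Y$ is a complex analytic curve by Lemma \ref{semical} and $\pi$ is a local biholomorphism away from isotropy, so $\pi(Y)$ is a pure one-dimensional, locally complex analytic definable subset of $V$. Passing to its closure in $V$ produces a definable, globally complex analytic subset of the algebraic variety $V$, which Lemma \ref{defineableanalytic} declares algebraic; a dimension and irreducibility argument then identifies this closure with $\pi(Y)$ itself. The main technical nuisance will be the boundary bookkeeping on $\partial F_2$ (and the verification that the "up to lower dimension" claim truly does not affect $\pi(Y)$ in positive dimension), together with the passage to the closure, both handled by standard o-minimal cell-decomposition methods.
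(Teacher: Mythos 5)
Your proposal follows essentially the same route as the paper: use the Case 1 identity $X_i\cdot Y = x_i\cdot Y$ to reduce $\pi(Y)$ to a finite union $\bigcup_i \pi(x_i Y \cap F_2)$, observe this is definable, and invoke Lemma \ref{defineableanalytic}. Your restriction to the index set $I=\{i : X_i\cap\Gamma\neq\emptyset\}$ and the coset observation $X_i\subset x_i\,\mathrm{Stab}(Y)$ are sound and in fact tidy up a small imprecision in the paper's statement.

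The one place you diverge from the paper is where you pass to the closure of $\pi(Y)$, apply Lemma \ref{defineableanalytic} to $\overline{\pi(Y)}$, and then try to recover $\pi(Y)=\overline{\pi(Y)}$ by a ``dimension and irreducibility argument.'' This does not quite close: a Zariski-dense, one-dimensional, constructible subset of an irreducible curve need not equal the whole curve (it could be the curve minus finitely many points), so dimension and irreducibility alone do not force $\pi(Y)$ to be closed. The paper handles this more directly and more cleanly: each $x_i Y\cap F_2$ is closed in $F_2$, and $\pi|_{F_2}$ is proper, so each $\pi(x_i Y\cap F_2)$ is closed in $V$; hence $\pi(Y)$, being a finite union of closed sets, is itself closed, and then it is a definable, \emph{globally} complex-analytic subset of $V$ to which Lemma \ref{defineableanalytic} applies directly. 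You should replace the closure-and-argue-back step with this direct observation. The ``lower-dimensional boundary'' caveat you flag is a genuine technical point that the paper also elides, so it is not a defect of your proposal relative to theirs, but it would be worth settling by noting that if $\gamma Y$ meets the interior of $F_2$ then $\dim_\C(\gamma Y\cap Z_0)=1$ automatically, so the exceptional $\gamma$ contribute only boundary points which already lie in the other translates.
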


\begin{proof} 

We have that $$\pi(Y) = \cup_{g\in\gamma} \pi(Y\cap g\cdot F_2) = \cup_{g\in\gamma} \pi(gY\cap F_2).$$

Now, if $g\in\gamma$ and $gY\cap F_2\neq 0$, then in fact $$Y\cap F_2 \subset F_2\cap g\cdot Z = Z_0$$ and so $g\in X$. Thus, there exists an $i$ with $g\in X_i$. 
We thus have that $$\pi(Y) = \cup_{1\leq i\leq m} \pi(x_i\cdot Y \cap F_2),$$ and thus $\pi(Y)$ is a finite union of closed sets, and therefore closed. It is also
definable, and so must be algebraic by lemma \ref{defineableanalytic}.

\end{proof}

We now have that $Y$ is a semialgebraic subvariety such that $\pi(Y)$ is also algebraic. By lemma \ref{UY}, $Y$ must be special. 

\subsection{ Case 2: For some $1\leq i\leq m, \dim_{\R} X_i\cdot Y>2$}

Wlog, we assume $\dim_{\R} X_1\cdot Y>2$. Take a small real analytic curve $I\subset X_1$, and consider a local complexification $I_{\C}\subset Sp_4(\C)$. 
Define $Y^0$ to be a connected component of $Y_2=I_{\C}\cdot Y\cap \Hh_2$ . By analyticity, $Y^0$  is contained in $Z$. Moreover, the complex dimension of 
$Y^0$ must be at least 2, and so $Y_2$ is an open component of $Z$. Since $I_{\C}$ is definable, $Y_2$ is also definable. Now, define

$$X_2:=\{g\in Sp_4(\R)\}\mid \dim_{\C} g\cdot Y_2\cap Z_0 = 2\}.$$

Note that for any point $g\in X_2$, we must have $g\cdot Z^0 = Z^0$. We now prove that $X_2\cap\Gamma$ is infinite. Assume not.
Since $X_2\cap\Gamma$ is finite, then $I\cdot Y$ intersects only finitely many fundamental domains. Pick $p\in I$, so that $p\cdot Y$ intersects finitely many
fundamental domains, and hence by lemma \ref{UY} $p\cdot Y$ is a weakly special variety. But weakly special subvarieties are invariant by infinitely many 
elements of $\Gamma$ and hence intersect infinitely many fundamental domains. This contradiction proves that $X_2\cap\Gamma$ is infinite.

Since $X_2$ is also definable, it must contain a real analytic curve $U\subset X_2$. Consider now the group 
$$G_Z=\{g\in Sp_4(\R)\mid g\cdot Z^0 = Z^0\}.$$

Thus $G_Z$ contains a 1-parameter subgroup,
and so the lie algebra $lie(G_Z)$ is a positive-dimensional vector space. Moreover, since $\Gamma_0\subset G_Z$, we must have 
$lie(G_Z)$ is invariant under conjugation by $\Gamma_0$, and therefore also by the Zariski closure of $\Gamma_0$. Thus $lie(G_Z)$ is invariant 
under conjugation by $Sp_4(\R)$. Since $Sp_4(\R)$ is simple, this means that $lie(G_Z)=lie(Sp_4(\R))$, and so $G_Z=Sp_4(\R)$, which is a contradiction.

\section{Proof of Theorem \ref{AOSP2}}

\begin{proof}

Let $V\subset \mathcal{A}_{2,1}$ be a variety defined over some number field $K$, which is the closure of the CM points inside it. Consider $Z=\pi^{-1}(V)$ and 
let $Z_0=F_g\cap Z$. $Z_0$ is definable. Now, let $x\in V$ be a CM point and set $\{x_i\}_{1\leq i\leq m}$ to be the galois orbit of $x$ over $K$.
Set $w_i\in Z_0$ be a pre-image of $x_i$, so that $\pi(w_i)=x_i$. By Theorem \ref{Heights}, there is an $\delta(g)>0$ such that the heights of the $x_i$ are at 
most $$H(w_i)\ll m^{1/\delta_g}.$$ Thus, we can conclude by Pila-Wilkie (\cite{PW} theorem 1.8) that at least 1 (in fact, most, but all we need is 1) 
$x_i$ is contained in a positive dimensional algebraic variety. By Theorem \ref{AL} this must be a weakly special subvariety. 
Thus all but finitely many CM point in V must have a Galois conjugate which is contained in a positive-dimensional weakly special subvariety of $V$. 
 
Since Galois conjugates of weakly special subvarieties are weakly special, we conclude that all but
finitely many CM points lie on positive dimensional weakly special subvarieties $S_i$ of $V$, which are then special by virtue of containing CM points. 
If $V$ is 1 dimensional, than any special subvariety that $V$ contains must in fact 
equal $V$. So we assume from now on that the dimension of $V$ is 2, and each of the $S_i$ has dimension 1. Assume for the sake of contradiction that
$V$ is not special. 

Now, say $S_i$ is a weakly special subvariety. Then there exist a semisimple subgroup $H_i\subset Sp_4(\R)$ and an element $z\in F_g$ such that 
$S_i = \pi(H_i\cdot z)$. 

\begin{lemma}\label{finitespecial}

The set of groups $H_i$ is finite.

\end{lemma}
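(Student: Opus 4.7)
The plan is to suppose for contradiction that $\{H_i\}$ is infinite, and show that this forces $V$ to be weakly special (hence special, since $V$ contains CM points), contradicting the standing assumption. First I would show that the $H_i$ lie in only finitely many $Sp_4(\R)$-conjugacy classes. Each $H_i$ is the non-compact semisimple part of the Mumford--Tate group of a Hodge-generic point of $S_i$, associated to a Shimura sub-datum of $(GSp_4,\Hh_2)$ with $1$-dimensional Hermitian symmetric space. Hence $H_i$ is $\R$-isomorphic to $SL_2(\R)$ (possibly times a compact factor), and the Hodge-compatible embeddings of $SL_2$ into $Sp_4$ that arise this way---for instance via products with a CM elliptic curve factor, diagonal embeddings, or Shimura curves from indefinite quaternion algebras---fall into finitely many $Sp_4(\R)$-conjugacy classes. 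By passing to a subsequence, one may therefore assume all $H_i$ lie in a single conjugacy class: $H_i = g_i H g_i^{-1}$ for a fixed $H$ and a sequence $g_i \in Sp_4(\R)$ distinct modulo the normalizer $N_{Sp_4(\R)}(H)$.

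Next, I would rewrite each $S_i$ as $\pi(H \cdot w_i)$ with $w_i = g_i^{-1} z_i$, and introduce
\[ W := \{w \in \Hh_2 \mid H\cdot w \subset Z\}. \]
This set is $H$-invariant under right translation, contains all the $w_i$, and $W\cap F_2$ is definable in $\R_{an,exp}$---combining the definability of $Z \cap F_2$ from \cite{PS} with the fact that $H$ is fixed. Distinct $H_i$ (equivalently, distinct $g_i$ modulo $N_{Sp_4(\R)}(H)$) correspond, modulo the $\Gamma_0$-action used to pull back to $F_2$, to distinct $H$-orbits $H\cdot w_i$ contained in $Z$.

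If infinitely many such orbits arise, then an o-minimal cell decomposition of $W\cap F_2$ forces $W$ to contain a positive-dimensional definable family of $H$-orbits: the alternative, that only finitely many $H$-orbits meet $F_2$, is ruled out by a pigeonhole-type argument via the $\Gamma_0$-action together with the definability of $W\cap F_2$. The union of this positive-dimensional family of $1$-complex-dimensional $H$-orbits is a complex analytic subset of $Z$ of complex dimension at least $2$; since $\dim_{\C} Z = 2$, it exhausts a connected component of $Z$. Thus $Z$ is swept out by $H$-orbits of a positive-dimensional family, and is therefore semialgebraic. By Theorem \ref{AL}, $Z$ is weakly special, so $V$ is weakly special; containing CM points, $V$ is special, contradicting our assumption.

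The main obstacle I foresee lies in the o-minimality step of the third paragraph: rigorously deducing from the infinitude of distinct $H$-orbits in $W$ the existence of a positive-dimensional definable family of such orbits. This requires carefully combining the cell decomposition of $W\cap F_2$ with the action of the monodromy group $\Gamma_0$ (to ensure that sufficiently many distinct orbits survive when pulled back into $F_2$) and with the $H$-invariance of $W$ (to ensure that the positive-dimensional accumulation of orbits occurs transversely to the direction of $H$ rather than along it).
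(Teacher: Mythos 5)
Your approach is genuinely different from the paper's, and it has two real gaps. First, the set $W = \{w \in \Hh_2 \mid H \cdot w \subset Z\}$ does not, as you claim, contain the points $w_i = g_i^{-1}z_i$. Writing $H_i z_i = g_i H w_i$ is fine, but $\pi$ is only $\Gamma$-equivariant, not $Sp_4(\R)$-equivariant: $\pi(Hw_i) = \pi(g_i^{-1} H_i z_i)$ need not equal $\pi(H_i z_i) = S_i$ unless $g_i\in\Gamma$, and there is no reason the conjugating element $g_i \in Sp_4(\R)$ can be chosen in $\Gamma$. So $Hw_i \not\subset Z$ in general, and $W$ does not parameterize the $S_i$. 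The paper sidesteps this exact issue by working instead with the two-parameter definable set $B=\{(t,z)\in Sp_4(\R)\times F_g \mid tHt^{-1}z\subset Z^0\}$, keeping the conjugator $t$ as a free variable rather than absorbing it into the point.

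Second, and more importantly, you have lost the decisive ingredient of the paper's argument: the set $\{H_i\}$ is \emph{countable}, because each $H_i$ is the Zariski closure of the finitely generated arithmetic group $\Gamma_i = H_i\cap Sp_4(\Z)$, and there are only countably many finitely generated subgroups of $Sp_4(\Z)$. Once you know $\{H_i\}$ is countable and (via $B$) definable, finiteness is immediate --- a countable definable set in an o-minimal structure is finite --- with no need to manufacture a positive-dimensional family and run a contradiction through Theorem \ref{AL}. Your substitute route (infinitude $\Rightarrow$ positive-dimensional definable family of orbits $\Rightarrow$ $Z$ swept out $\Rightarrow$ $V$ weakly special) is not merely ``the main obstacle''; the middle step is genuinely nontrivial: you would need to show that the orbits in the family are distinct as subsets (not just that the groups are distinct), that their union is $2$-dimensional, and then that this union being all of a component of $Z$ forces $V$ special, which is itself an argument in the style of Case~2 of the Ax--Lindemann proof rather than a direct application of Theorem \ref{AL}. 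The arithmeticity/countability observation short-circuits all of this.
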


\begin{proof}
There are finitely many semisimple lie algebras which embed into $lie(Sp_4(\R))$, and by lemma A.1.1 in \cite{EMV}
these come in finitely many sets of conjugacy classes, so we can assume wlog that there is a fixed semisimple lie group $H\subset Sp_4(\R)$ and elements
$t_i\in Sp_4(\R)$ with $H_i=t_iHt_i^{-1}$. Now, as $S_i$ is a special subvariety, the group $\Gamma_i=H_i\cap Sp_4(\Z)$ is Zariski dense in $H_i$. Since
$\Gamma_i$ is also finitely generated, the set of such groups is countable and hence the set of possible $H_i$ is countable. 

Now, consider the set $$B=\{((t,z)\in Sp_4(\R)\times F_g \mid tHt^{-1}\cdot z\subset Z^0\},$$ which is definable. If $(t,z)\in B$, then either
$tHt^{-1}z$ is special, or by theorem \ref{AL} it must be contained in a special variety. But by dimension considerations, that special variety must have dimension
at least 2, and so it must be $V$. Since we're assuming that $V$ is not special, we conclude that the special subvarieties $S_i$ are precisely the images
of $tHt^{-1}\cdot z$ for $(t,z)\in B$. 

 Since a countable definable set is finite, this proves the claim.
\end{proof}

By lemma \ref{finitespecial} there are finitely many groups $H_1,\dots,H_m$
such that every weakly special subvariety contained in $Z$ which intersects the upper half plane is an $H_i$ orbit. Define $U$ to be the pre-image of all
weakly special subvarieties in $V$ restricted to the fundamental domain $F_g$ so that by the above
$$U = \{w\in Z_0\mid \exists i\in\{1,2,\dots,m\}, H_i\cdot w\subset Z\}.$$

We therefore have that $U$ is definable. Moreover, since $U$ is contained in $Z^0$ its dimension is everywhere locally at most 2. Moreover, since $U$ cannot be 
a finite union of weakly special subvarieties of dimension 1, it dimension must somewhere be $2$. 
Now, let $W_i,i\in\N$ denote the countably many special subvarieties of  $\mathcal{A}_{2,1}$ which have dimension at most 2.
Every weakly special subvariety of $\mathcal{A}_{2,1}$ is contained in one of the $W_i$, so we know that 
$$U=\cup_{i\in\N} U\cap \pi^{-1}(W_i).$$ But now, if $V$ is not special than $$\cup_{i\in\N}V\cap W_i$$ is a countable union of algebraic varieties of dimension
at most 1. Since $U$ must somewhere have dimension 2, this is a contradiction. 

\end{proof}

\end{document}